\newcommand{\mathsym}[1]{{}}
\DeclareMathOperator{\Spec}{Spec}
\DeclareMathOperator{\rank}{rank}
\renewcommand{\k}{\mathbf{k}} 
\renewcommand{\O}{\ensuremath{\mathcal{O}}}
\renewcommand{\l}{\ensuremath{\ell}}
\renewcommand{\dim}{\ensuremath{\mathrm{dim}}}%
\newcommand{\N}{\mathcal{S}} 
\renewcommand{\k}{\mathbf{k}} 
\definecolor{MyLightMagenta}{cmyk}{0.1,0.8,0,0.1}
\definecolor{MyDarkBlue}{rgb}{0.1,0,0.3}
\theoremstyle{change}
\newtheorem{teo}{Theorem.\ }[section]
\newtheorem{defi}[teo]{Definition.\ }
\newtheorem{exam}[teo]{Example.\ }
\newtheorem{prop}[teo]{Proposition.\ }
\newtheorem{lema}[teo]{Lemma.\ }
\newtheorem{cor}[teo]{Corollary.\ }
\newtheorem{conj}{Conjecture.\ }[section]
\newenvironment{proof}{\paragraph{Proof.}}{\hfill$\square$}
\def\no@breaks#1{{\def\\{ \ignorespaces}#1}}    
\def\cleardoublepage{\clearpage\if@twoside \ifodd\c@page\else
\hbox{} \thispagestyle{empty}
\newpage
\if@twocolumn\hbox{}\newpage\fi\fi\fi} \makeatother
\newcommand\BackgroundPicture[2]{%
\setlength{\unitlength}{1pt}%
default \put(0,\strip@pt\paperheight){%
\parbox[t][\paperheight]{\paperwidth}{%
\vfill
 \centering \includegraphics[angle=#2, width=15cm, height=15cm,  bb=0 0 150 150]{#1}
\vfill
}}} %
\begin{document}

\date{}

\title{On the Normal Sheaf of Gorenstein Curves}
\author{Andr\'e Contiero, Aislan Leal Fontes \& J\'unio Teles
\thanks{ The first author was partially sponsored by Funda\c c\~ao de Amparo a Pesquisa do Estado de Minas Gerais (FAPEMIG) grant no. APQ-00798-18. The third author was financed in part by the Coordena\c c\~ao de Aperfei\c coamento de Pessoal de N\'ivel Superior - Brasil (CAPES) - Finance Code 001.
\smallskip 
\newline  ${}$
\,\,\,\,\,\,\,{\em Keywords and Phrases: Gorenstein curve, Gonality and Normal Sheaf.} 
\newline ${}$ \,\,\,\,\,\, {\bf 2020 MSC:} 14H20, 14H45 \& 14H51.}}

\maketitle

\begin{abstract}
We show that any tetragonal Gorenstein integral curve is a complete intersection in its respective $3$-fold rational normal scroll $S$, implying that the normal sheaf on $C$ embedded in $S$, and in $\mathbb{P}^{g-1}$ as well, is unstable for $g\geq 5$, provided that $S$ is smooth.
We also compute the degree of the normal sheaf of any singular
reduced curve in terms of the Tjurina and Deligne numbers,  providing a semicontinuity
of the degree of the normal sheaf over suitable deformations, revisiting classical results of
the local theory of analytic germs.
\noindent 

\end{abstract}

\section{Introduction}\label{Intro}

It is well known that a non-hyperelliptic Gorenstein curve $C$ of arithmetical genus $g>2$
can be embedded in the projective space $\mathbb{P}^{g-1}$ via its dualizing sheaf. Thus, $C$ becomes a canonical Gorenstein curve,
i.e. has genus $g$ and degree $2g-2$. More recently, Kleimann \& Martins in \cite{KM09}, followed by Lara, Martins \& Souza in \cite{LMS19},
show that if a singular curve has gonality $k$, then its \emph{canonical model} lies on a $(k-1)$-fold rational normal scroll of degree
$g-k+1$. In particular, assuming that $C$ is Gorenstein, its canonical model coincides with that one given by the dualizing sheaf.
Since the normal sheaf of a curve encodes many geometrical information, it is only natural to ask about the stability of the normal sheaf 
of a canonical Gorenstein curve considered in its both natural ambient spaces,
rational normal scrolls and $\mathbb{P}^{g-1}$.

The stability of the normal sheaf $\mathcal{N}_{C/\mathbb{P}^{g-1}}$ of canonical curves is well known when $C$ is a smooth 
curve of genus at most $8$. The study of $\mathcal{N}_{C/\mathbb{P}^{g-1}}$ starts with a remarkable work due to Aprodu, Farkas \& Ortega \cite{AFO16}, where
the authors show that the normal sheaf of any tetragonal 
smooth curve of genus  $7$ with maximal Clifford index is stable,  showing in particular that Mercat conjecture fails for
general curves of genus $7$. With the same techniques, the authors 
also prove that the normal sheaf of general tetragonal canonical smooth curve of genus $g\geq 6$
is unstable, in particular, the normal sheaf of a general canonical smooth curve of genus $6$ is unstable.
They also establish a conjecture.

\begin{conj}[Aprodu--Farkas--Ortega]
The normal sheaf $\mathcal{N}_{C/\mathbb{P}^{g-1}}$ of a general smooth canonical curve $C$ of genus $g\geq 7$ is stable.
\end{conj}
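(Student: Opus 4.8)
\medskip

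\noindent The conjecture concerns the \emph{general} curve, so the plan is to reduce it to the construction of a single sufficiently good example in each genus and then propagate. As $[C]$ ranges over the (irreducible) non-hyperelliptic locus of $\mathcal{M}_g$, the normal sheaves $\mathcal{N}_{C/\mathbb{P}^{g-1}}$, with $\mathbb{P}^{g-1}=\mathbb{P}(H^{0}(\omega_C)^{\vee})$, form a flat family of vector bundles of fixed rank and degree on the fibres of the universal curve, and slope-stability is an open condition in such a family; since any nonempty open subset of an irreducible space contains the generic point, it suffices to exhibit, for each $g\geq 7$, one smooth canonical curve $C_{0}$ (or a nodal degeneration to which the normal sheaf extends flatly) with $\mathcal{N}_{C_{0}/\mathbb{P}^{g-1}}$ stable. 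First I would fix the numerics: $\mathcal{N}_{C/\mathbb{P}^{g-1}}$ has rank $g-2$ and degree $2(g^{2}-1)$, hence slope $\mu=2(g^{2}-1)/(g-2)$, so a coherent subsheaf $\mathcal{F}\subset\mathcal{N}_{C}$ of rank $r$ violates stability exactly when $\deg\mathcal{F}\geq r\mu$. Note that the threshold $g\geq 7$ is forced: the general curve of genus $6$ is tetragonal, hence its normal sheaf is unstable by the instability result recovered in this paper, so no statement of this type can hold at $g=6$.

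\medskip

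\noindent Next I would reduce the problem to controlling line subbundles. If $\mathcal{F}\subset\mathcal{N}_{C}$ destabilizes, replace it by the maximal destabilizing subsheaf and pass either to a line subbundle $L\hookrightarrow\mathcal{N}_{C}$ of its Harder--Narasimhan-maximal piece or to a quotient line bundle of $\mathcal{N}_{C}$ of minimal slope; in the first case $\deg L\geq\mu$ and there is a nonzero section of $\mathcal{N}_{C}\otimes L^{-1}$. Using the Euler restriction sequence and the conormal sequence of the canonical curve and twisting by negative powers of $\omega_{C}=\mathcal{O}_{C}(1)$, such a section would force an unexpectedly special linear series on $C$ whose degree and dimension are pinned by $\mu$ --- equivalently, an unexpected nonvanishing $H^{0}(C,\mathcal{N}_{C}\otimes\omega_{C}^{-m})\neq 0$ for a suitable $m\geq 2$. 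The genericity of $C$ then has to be used here, in the spirit of the scrollar analysis of Kleiman--Martins and Lara--Martins--Souza quoted above, to rule this out and conclude $\deg L<\mu$, the relevant invariant being the (maximal) Clifford index of $C$. The contrast with the tetragonal case is precisely the mechanism of the present paper: the complete intersection structure of $C$ in its $3$-fold scroll $S$ produces an honest destabilizing line subbundle of $\mathcal{N}_{C/S}\subset\mathcal{N}_{C/\mathbb{P}^{g-1}}$, and a proof of the conjecture must show that this phenomenon disappears once the gonality rises to the generic value $\lceil (g+2)/2\rceil$.

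\medskip

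\noindent Finally there is the construction of the example. For small $g$ the general canonical curve sits inside a homogeneous model --- e.g. as a curve section of the orthogonal Grassmannian $\og(5,10)\subset\mathbb{P}^{15}$ when $g=7$, the case treated in \cite{AFO16} --- and one computes $\mathcal{N}_{C_{0}}$ from the normal bundle of the ambient together with adjunction. For large $g$, where no such model exists, the natural substitute is a degeneration of $C$ to a suitable reducible nodal canonical curve (a chain of low-genus pieces, or a highly nodal rational curve) for which the restricted normal sheaf breaks into computable pieces; one then needs the degeneration to take place \emph{inside the canonical model} and to preserve the relevant $H^{1}$, so that stability of the special fibre forces stability of the general one by semicontinuity. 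I expect this last step to be the main obstacle: the numerics and the reduction to line subbundles are routine, but the vanishing $H^{0}(C,\mathcal{N}_{C}\otimes\omega_{C}^{-m})=0$ for the general $C$ is a Koszul-type statement currently within reach only in low genus or through a carefully engineered degeneration, and producing uniformly in $g$ a single canonical curve whose normal sheaf is provably stable is exactly the point where a complete proof is still missing.
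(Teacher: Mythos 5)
This statement is not a theorem of the paper at all: it is the Aprodu--Farkas--Ortega conjecture, quoted in the introduction as an open problem (established for $g=7$ in \cite{AFO16} via tetragonal curves of maximal Clifford index, and for $g=8$ by Bruns \cite{Bru17} via Mukai's Grassmannian model; open for $g\geq 9$). There is therefore no proof in the paper to compare yours against, and your text, read as a proof, contains a gap that you yourself concede in your final sentence: the step on which everything depends is not carried out.

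To be concrete about what is and is not in place. The preliminary reductions you describe are sound: the numerics agree with the paper's own computation in Theorem \ref{gmaior5} (rank $g-2$, degree $2(g^2-1)$, slope $2(g^2-1)/(g-2)$); openness of stability in a flat family of bundles over the irreducible non-hyperelliptic locus does reduce the conjecture to exhibiting one stable example per genus; the remark that $g\geq 7$ is forced because the general genus-$6$ curve is tetragonal and hence falls under the instability mechanism of Section \ref{section2} is correct; and passing to the maximal destabilizing subsheaf and then to line subbundles or line-bundle quotients is the standard reduction. But the conjecture's entire content lives in the step you defer: producing, for every $g\geq 9$, either a single canonical curve whose normal bundle is provably stable, or a degeneration inside the canonical embedding together with the cohomological vanishing $H^0(C,\mathcal{N}_{C/\mathbb{P}^{g-1}}\otimes\omega_C^{-m})=0$ that would exclude destabilizing line subbundles. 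No construction, no vanishing theorem, and no degeneration is actually supplied, so nothing beyond the known cases $g=7,8$ is proved. What you have written is an accurate survey of how one might attack the conjecture, not a proof of it, and it should be presented as such.
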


Later on, Bruns \cite{Bru17} uses the fact that a smooth general 
canonical curve $C$ of genus $8$ is a transversal linear section of a Grassmannian $G(2,6)$ in its 
Plucker embedding, c.f. \cite{MukIde}, to show the stability of the normal sheaf in this case, confirming 
Aprodu--Farkas--Ortega conjecture for genus $8$.

In the last decades many techniques and notions 
for singular curves have been carried out, such as gonality, canonical models, Petri's analysis
and Max Noether Theorem. In this short paper we want to propose the study of the stability of the normal sheaves on singular curves, 
starting from Gorenstein ones, and trying to avoid the normalization of these curves. One difficult lies in a right notion of what a general Gorenstein curve is, without excluding the singular ones. We do it here \emph{genus by genus}.

In section \ref{section2} of this paper, we recall the notions of linear system and gonality on singular curves. We also
introduce the notion of (un)stability of the normal sheaf of a singular curve. We also give some examples by showing that the normal sheaf of canonical Gorenstein curve in $\mathbb{P}^{g-1}$ of genus $4$ and $5$ is unstable,
c.f. Example \ref{est-proj}.

In section \ref{mainsec}, we show that any tetragonal Gorenstein curve is a complete intersection in its corresponding $3$-fold rational normal scroll $S$, c.f.
Theorem \ref{intscroll}, extending a result due Schreyer \cite[Sec. 4]{Schr86}. With the main result of Section \ref{mainsec} in hands, we are able to
show that the normal sheaves $\mathcal{N}_{C/S}$ and $\mathcal{N}_{C/\mathbb{P}^{g-1}}$ are unstable for $g\geq 5$, c.f. Theorem \ref{gmaior5}, provided that the $3$-fold rational normal scroll $S$ associated to $C$ is smooth. We also provide a sufficient condition to
the $3$-fold scroll of a Gorenstein curve of genus $6$ be smooth, see Definition \ref{def1} and Lemma \ref{scroll-suave}, that conclude our section \ref{normalsec}.

One important part of studying the normal sheaf on a curve, lies on the computation of its degree. Since our curves are singular, the computation
becomes more involved when the genus grows. The section \ref{section3} of this paper, is addressed to compute the degree of the normal sheaf on any singular
reduced projective curve by means of classical invariants of the local theory of singularities, namely Tjurina and Deligne numbers, 
c.f. Theorem \ref{degnormal}. The provided formula can be very easy to manage, see Example \ref{exemplo} and Table \ref{tabela1}. As an application of Theorem
\ref{degnormal}, we prove that the degree of the normal sheaf is an upper semicontinuous function over suitable deformations, c.f. Corollary \ref{semicont}.

\bigskip

\noindent \textbf{Acknowledgements.} We are deeply grateful to Renato Vidal Martins 
for his help and many conversations on singular curves lying on scrolls. We are also grateful to Marcelo Escudeiro Hernandes for conversations and suggesting many useful references. The authors would like to express their deep appreciation to the anonymous reviewer for her/his careful and fast reading, suggestions and several corrections. 

\section{Notation and background}\label{section2}
Let $C$ be an integral projective curve defined over an algebraically closed field $\mathbf{k}$. We recall that the
\textit{degree} of a coherent sheaf $\mathcal{F}$ on $C$ is the integer 
\begin{equation}\label{defdeg}
	\deg(\mathcal{F})=\chi(\mathcal{F})-\rank(\mathcal{F})\chi(\mathcal{O}_C)
\end{equation} where $\chi$ stands for the usual Euler characteristic. By virtue of Riemann--Roch theorem for singular curves,
we easily see that
$\mbox{deg}(\mathcal{F})=\chi(\mathcal{F})-\mbox{rank}(\mathcal{F})(1-g)$, where $g$ is the arithmetical
genus of $C$. The \textit{slope} of $\mathcal{F}$ is by definition 
$$\mu(\mathcal{F}):= \dfrac{\deg \mathcal{F}}{\rank\mathcal{F}}.$$

\noindent For the sake of self-containedness, we include the proof of the following very naive result.

\begin{lema}\label{lema-deg}
Let $C$ be an integral projective curve and $\mathcal{F}$ a coherent over $C$. The following are true:
\begin{enumerate}[label=\textbf{(\roman*)}]
\item \label{item1} $\deg(\mathcal{F})=\deg(\det(\mathcal{F}));$
\item \label{item2}If $\mathcal{F}$ is a torsion sheaf, then
$\deg(\mathcal{F})=\sum_{P\in C}\dim_k(\mathcal{F}_P);$
\item \label{item3} Given an exact sequence, $0\to \mathcal{F}_1\to \mathcal{F}_2\to \cdots \to \mathcal{F}_n\to0$,
of coherent sheaves over $C$,
we have $\sum_{i=1}^{n}(-1)^i\deg(\mathcal{F}_i)=0.$
\end{enumerate}
\end{lema}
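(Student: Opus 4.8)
The plan is to prove the three items essentially by reducing everything to the additivity of the Euler characteristic, which is the one fact one is always allowed to invoke. First I would recall that for any short exact sequence $0\to\mathcal F'\to\mathcal F\to\mathcal F''\to 0$ of coherent sheaves on $C$ one has $\chi(\mathcal F)=\chi(\mathcal F')+\chi(\mathcal F'')$ and $\rank(\mathcal F)=\rank(\mathcal F')+\rank(\mathcal F'')$ (rank being measured at the generic point, where localization is exact); combined with the definition \eqref{defdeg} of degree this immediately gives that $\deg$ is additive on short exact sequences. For item \ref{item3}, break the long exact sequence $0\to\mathcal F_1\to\cdots\to\mathcal F_n\to 0$ into short exact pieces via the images $\mathcal K_i:=\Image(\mathcal F_i\to\mathcal F_{i+1})=\Ker(\mathcal F_{i+1}\to\mathcal F_{i+2})$, apply additivity of $\deg$ to each piece $0\to\mathcal K_{i-1}\to\mathcal F_i\to\mathcal K_i\to 0$, and sum the resulting telescoping relations with alternating signs to obtain $\sum_{i=1}^n(-1)^i\deg(\mathcal F_i)=0$.

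For item \ref{item2}, if $\mathcal F$ is a torsion sheaf then $\rank(\mathcal F)=0$, so $\deg(\mathcal F)=\chi(\mathcal F)=h^0(C,\mathcal F)-h^1(C,\mathcal F)$. Since a torsion sheaf on a curve has zero-dimensional (hence affine) support, its higher cohomology vanishes, so $\deg(\mathcal F)=h^0(C,\mathcal F)=\dim_k\Gamma(C,\mathcal F)$; and because the support is a finite set of points and $\mathcal F$ is the direct sum of its stalks over that support (a skyscraper-type decomposition), $\Gamma(C,\mathcal F)=\bigoplus_{P\in C}\mathcal F_P$, giving $\deg(\mathcal F)=\sum_{P\in C}\dim_k(\mathcal F_P)$, the sum being finite.

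For item \ref{item1}, recall that $\det(\mathcal F)$ is defined, for $\mathcal F$ of rank $r$, by choosing a finite locally free resolution or (more simply) using that on the smooth locus $U$ of $C$ a coherent sheaf has a finite locally free resolution, so $\det(\mathcal F):=\bigotimes_i (\det\mathcal E_i)^{(-1)^i}$ for $0\to\mathcal E_m\to\cdots\to\mathcal E_0\to\mathcal F|_U\to 0$, extended to $C$; one then checks $\det(\mathcal F)$ is a line bundle with $\rank(\mathcal F)=r$ fixed. Using item \ref{item3} for the resolution and the fact that for locally free $\mathcal E$ of rank $s$ one has $\deg(\mathcal E)=\deg(\det\mathcal E)$ (which for line bundles is a tautology and for higher rank follows by filtering $\mathcal E$ by subbundles, or by the splitting principle, reducing to the rank-one case and additivity of both sides), one gets $\deg(\mathcal F)=\sum_i(-1)^i\deg(\mathcal E_i)=\sum_i(-1)^i\deg(\det\mathcal E_i)=\deg\big(\bigotimes_i(\det\mathcal E_i)^{(-1)^i}\big)=\deg(\det\mathcal F)$. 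The main obstacle is the bookkeeping in item \ref{item1}: making sense of $\det(\mathcal F)$ for a sheaf that is only generically locally free on a singular curve, and checking the construction is independent of the chosen resolution; the cleanest route is to work over the smooth locus $U$ (whose complement is finite, hence does not affect $\deg$ of a line bundle up to the torsion already handled in item \ref{item2}) and invoke that $\deg$ of a torsion-free rank-one sheaf agrees with the degree of its double dual away from finitely many points — but since the lemma is labelled ``very naive'' I expect the authors simply cite the standard construction and let additivity do the work.
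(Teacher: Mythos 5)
Your proposal is correct and follows essentially the same route as the paper: all three items rest on the additivity of the Euler characteristic and of rank, with \ref{item2} handled by noting the vanishing of $H^1$ for a skyscraper sheaf. The only difference is one of detail --- the paper dismisses \ref{item1} as ``immediate from the definition'' whereas you carefully set up $\det(\mathcal F)$ via locally free resolutions on the smooth locus and reduce to \ref{item3}; that extra care is welcome but not a different method.
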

\begin{proof}
Item \ref{item1} follows automatically from equation \eqref{defdeg}. Since the rank of $\mathcal{F}$ is zero, 
and $\mathcal{F}$ is a skyscraper sheaf, we get $H^1(C,\mathcal{F})=0$ and so
$\deg(\mathcal{F})=h^0(C,\mathcal{F})$, getting \ref{item2}. The proof of \ref{item3} follows by the
additive properties of Euler's characteristic and also of the rank on exact sequences.
\end{proof}

\medskip

We also recall that the notion of linear systems on singular curves
is characterized by interchanging line bundles by torsion free sheaves of rank $1$.
A \emph{linear system of dimension $r$ and degree $d$} on a curve $C$, 
possibly singular, is a set of the form
$$\mathfrak{g}^{r}_{d}:=\{x^{-1}\mathcal{F}\ |\ x\in V\setminus 0\}$$
where $\mathcal{F}$ is a fractional ideal sheaf of degree $d$ on $C$ and $V$ is a vector subspace of 
$H^{0}(C, \mathcal{F} )$ of dimension $r+1$. Note that \emph{non-removable} base points are allowed.

The \emph{gonality of $C$} is the smallest $k$ for which there exists a 
$\mathfrak{g}_{k}^{1}$ on C, or equivalently, a torsion free sheaf $\mathcal{F}$ of rank $1$ on $C$ with degree $k$ and 
$\dim \mathrm{H}^{0}(\mathcal{C, F})\geq 2$. We also note that singular curves may admit linear systems of degree
bigger than $\left\lfloor (g+3)/2\right\rfloor$, c.f. \cite{LMS19}.

The next definition is just a simple adjustment of the notion of stability, and semi-stability, of a coherent sheaf on a smooth curve
in a way to also include singular curves.

\begin{defi}
A coherent sheaf $\mathcal{F}$ on an integral projective curve, possibly singular, is
\textit{semistable} (resp. \textit{stable}) if for each nontrivial coherent subsheaf  
$0\neq \mathcal{G}\subsetneq \mathcal{F}$, with $\rank\mathcal{G}<\rank\mathcal{F}$,  one has 
$\mu(\mathcal{G})\leq \mu(\mathcal{F})$ (resp. $\mu(\mathcal{G})<\mu(\mathcal{F})$). Moreover, 
$\mathcal{F}$ is said to be \textit{polistable} if it can be written as a sum of stable subsheaves, all of the same slope.
As usual, a sheaf is \emph{unstable} if it is not stable.
\end{defi}

\begin{exam}\label{exemplane}
If $C$ is an integral plane curve of degree $d$, then $\mathcal{N}_{C/\mathbb{P}^{2}}\cong \mathcal{O}_{C}(d)$.
Once $\mathcal{N}_{C/\mathbb{P}^{2}}$ is torsion free, and every coherent sheaf of rank zero is a torsion sheaf, 
we get that  $\mathcal{N}_{C/\mathbb{P}^{2}}$ is stable. In particular, the normal sheaf of a nonhyperelliptic
canonical Gorenstein curve of genus $3$ is stable.
\end{exam}

\medskip

The moduli space $\mathcal{M}_g$ of smooth curves of genus $g\geq 2$ admits a filtration 
$$\mathcal{H}_g:=\mathcal{M}_{g}(2)\subset \mathcal{M}_{g}(3)\subset \dots\subset \mathcal{M}_g\left(\left\lfloor (g+3)/2\right\rfloor\right)=\mathcal{M}_g,$$
where $\mathcal{M}_{g}(k):=\{[C]\in \mathcal{M}_g: C \hspace{0.2cm}\mbox{admits a }\hspace{0.2cm}\mathfrak{g}_{k}^{1}\}$ 
is an irreducible closed subset of $\mathcal{M}_g$, c.f. \cite{F69}, of dimension $2g+2k-5$, see \cite[eq. 2.3, pg. 346]{AC81}. Here $\mathcal{H}_g$ stands for the space of hyperelliptic curves.
Hence, the locus of $k$-gonal smooth curves is just $\mathcal{M}^k_{g}:=\mathcal{M}_{g}(k)\setminus\mathcal{M}_g(k-1)$.
If $k\geq \left\lfloor(g+3)/2\right\rfloor$, then $\mathcal{M}_{g}(k)=\mathcal{M}_g$,
see \cite{ACGH85}.  In this way, for smooth curves of genus $g$, the number $\left\lfloor(g+3)/2\right\rfloor$ is called \textit{generic gonality}, and usually, a \emph{general smooth curve} is defined in terms of the generic gonality.

In this paper we also define a general possibly singular curve with a fixed arithmetic genus $g$ in terms of the number $\left\lfloor(g+3)/2\right\rfloor$.
But it is also required, as the genus increases, to avoid also some specific cases, see for example the Definition \ref{def1} below. 

\begin{defi}
A general curve of arithmetic genus $4$ or $5$ is just an integral Gorenstein curve whose gonality is $3$ or $4$, respectively. 
\end{defi}

\begin{exam}\label{est-proj}
If $C$ is a general curve of genus $4$, then Petri's Theorem for Gorenstein curves, c.f. \cite[Section 3]{Sto93-2},
assures that $C$ is a complete intersection 
of a quadric with a cubic hypersurface in $\mathbb{P}^3$. Thus, the normal sheaf splits
$$\mathcal{N}_{C/\mathbb{P}^{3}}\cong \mathcal{O}_{C}(2)\oplus \mathcal{O}_{C}(3).$$
The nontrivial subsheaf $\mathcal{G}:=\mathcal{O}_{C}(3)$ of $\mathcal{N}_{C/\mathbb{P}^{3}}$ 
has slope $\mu(\mathcal{G})=18$ while
$$\det(\mathcal{N}_{C/\mathbb{P}^{3}})\cong \mathcal{O}_{C}(2)\otimes \mathcal{O}_{C}(3)\cong \mathcal{O}_{C}(5),$$
and so $\mu(\mathcal{N}_{C/\mathbb{P}^{3}})=30/2=15$. Hence,  $\mathcal{N}_{C/\mathbb{P}^{3}}$ is unstable. 
Now, we consider $C$ a general curve of genus 
$5$. Again, by Petri's Theorem for Gorenstein curves, $C$ is a complete 
intersection of three quadrics and so its normal sheaf can be written as sum
$$\mathcal{N}_{C/\mathbb{P}^{4}}\cong \mathcal{O}_{C}(2)\oplus \mathcal{O}_{C}(2)\oplus \mathcal{O}_{C}(2).$$
By taking the subsheaf $\mathcal{G}:=\mathcal{O}_{C}(2)\subsetneq\mathcal{N}_{C/\mathbb{P}^{4}}$ 
we get $\mu(\mathcal{G})=16=48/3=\mu(\mathcal{N}_{C/\mathbb{P}^{4}})$, hence the 
normal sheaf $\mathcal{N}_{C/\mathbb{P}^{4}}$ is also unstable. 

So we may conclude that
the normal sheaf $\mathcal{N}_{C/\mathbb{P}^{g-1}}$ of a general canonical curve of arithmetic genus $4$ or $5$ is unstable. In addition, the normal sheaf of a general canonical curve of genus $5$ is polistable.
\end{exam}

For further use, we finish this section presenting a naive generalization of
a result present in Hartshorne's book \cite[Thm. 7.11]{Har77}. 

Let $X\subseteq\mathbb{P}^{n}$ be a regular scheme of codimension $q$, and $Y\subseteq X$ be a closed 
subscheme of codimension $p$ in $X$. The dualizing sheaf 
$\omega_Y$ is given by $\omega_Y:=\mathcal{E}xt_{\mathcal{O}_{\mathbb{P}^n}}^{q}(\mathcal{O}_Y,\omega_{\mathbb{P}^{n}}).$
A result due to Grothendieck, c.f. \cite[Prop. 5]{Gro59}, assures that for any coherent sheaf $\mathcal{F}$ on $Y$, the sheaf
$\mathcal{E}^i(\mathcal{F}):= \mathcal{E}xt_{\mathcal{O}_{\mathbb{P}^{n}}}^{q+i}(\mathcal{F},\omega_{\mathbb{P}^{n}})$
does not depend on the regular space in which $Y$ is considered. So we may interchange $\mathbb{P}^n$ wit
the regular projective scheme. 
The conclusion is that the dualizing sheaf of $Y$ is also given by
$$\omega_Y= \mathcal{E}xt_{\mathcal{O}_X}^{p}(\mathcal{O}_Y,\omega_{X}).$$
Following the steps in an entirely similar way of
\cite[Thm. 7.11]{Har77}, just replacing the projective space $\mathbb{P}^{n}$ by a regular variety $X$, one can establish the next result.

\begin{lema}\label{dual-lci}
Let $Y$ be a closed subscheme of a $n$-fold regular projective scheme $X$. Suppose that $Y$ is a 
locally complete intersection of codimension $p$ in $X$, we have
$$\omega_Y=\omega_X\otimes \mathcal{O}_Y\otimes \det(\mathcal{N}_{Y/X}).$$
\end{lema}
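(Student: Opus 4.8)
The statement is the familiar adjunction formula for a locally complete intersection, and the plan is to mimic the argument of \cite[Thm.~7.11]{Har77} verbatim, with $\mathbb{P}^n$ replaced throughout by the regular projective scheme $X$. The only external input needed beyond standard homological algebra is the comparison result of Grothendieck quoted above, which guarantees that the $\mathcal{E}xt$-sheaves computing $\omega_Y$ may be formed inside $X$ rather than inside an ambient projective space. So the first step is to record that $\omega_Y=\mathcal{E}xt^{p}_{\mathcal{O}_X}(\mathcal{O}_Y,\omega_X)$, which has already been done in the paragraph preceding the statement.

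Next I would localise. Since being a locally complete intersection, the conclusion, and all the sheaves in sight are local in nature, it suffices to prove the isomorphism on stalks, or better on a suitable affine open cover of $X$ on which $Y$ is cut out by a regular sequence $f_1,\dots,f_p$ generating the ideal $\mathcal{I}=\mathcal{I}_{Y/X}$. On such an open set the Koszul complex $K_\bullet(f_1,\dots,f_p)$ is a finite free resolution of $\mathcal{O}_Y$ over $\mathcal{O}_X$. Computing $\mathcal{E}xt^{p}_{\mathcal{O}_X}(\mathcal{O}_Y,\omega_X)$ with this resolution, one gets the cokernel of the last map of the dualised Koszul complex $\mathcal{H}om(K_\bullet,\omega_X)$, which is $\omega_X\otimes\wedge^{p}(\mathcal{I}/\mathcal{I}^2)^{\vee}\otimes\mathcal{O}_Y$ — here one uses that the self-duality of the Koszul complex identifies the top cohomology of the dual complex with $\omega_X\otimes_{\mathcal{O}_X}(\mathcal{O}_X/\mathcal{I})$ twisted by the top exterior power of the conormal module. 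Since $\mathcal{N}_{Y/X}=(\mathcal{I}/\mathcal{I}^2)^{\vee}$ and $\det(\mathcal{N}_{Y/X})=\wedge^{p}\mathcal{N}_{Y/X}$, this reads $\omega_Y\cong\omega_X\otimes\mathcal{O}_Y\otimes\det(\mathcal{N}_{Y/X})$ locally.

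Finally I would check that these local isomorphisms are canonical enough to glue: the identification is functorial in the regular sequence up to the transition matrices, and the determinant twist absorbs exactly the ambiguity coming from changing generators of $\mathcal{I}$, so the local isomorphisms patch to a global one on $X$. The one point that genuinely requires the hypothesis that $X$ be \emph{regular} (and not merely Gorenstein) is the vanishing $\mathcal{E}xt^{i}_{\mathcal{O}_X}(\mathcal{O}_Y,\omega_X)=0$ for $i<p$, equivalently the fact that a locally complete intersection subscheme of codimension $p$ in $X$ is Cohen--Macaulay and $\mathrm{depth}_{\mathcal{I}}\mathcal{O}_X=p$; this is where one invokes that $\mathcal{O}_{X,x}$ is a regular local ring so that the $f_i$ form a regular sequence.

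\textbf{Main obstacle.} I expect the only real subtlety to be the globalisation/naturality step: making sure the locally defined isomorphisms are independent of the chosen regular sequence and hence glue, which is exactly the role played by taking the determinant of the normal bundle rather than the individual conormal generators. Everything else is a transcription of Hartshorne's proof, the substitution of $X$ for $\mathbb{P}^n$ being harmless precisely because of Grothendieck's base-independence of the $\mathcal{E}xt$-sheaves recalled above.
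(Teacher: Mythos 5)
Your proposal is correct and follows exactly the route the paper takes: the paper's own ``proof'' is simply the remark that one repeats the argument of \cite[Thm.~7.11]{Har77} with $\mathbb{P}^n$ replaced by the regular scheme $X$, justified by Grothendieck's independence of the $\mathcal{E}xt$-sheaves from the ambient regular space. You have merely spelled out the Koszul-complex computation and the gluing step that Hartshorne's proof (and hence the paper's citation of it) already contains.
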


\section{Tetragonal Gorenstein curves}\label{mainsec}

A nonhyperelliptic Gorenstein curve $C$ of genus $g>2$ has two natural ambient spaces to embed it, namely the projective
spaces $\mathbb{P}^{g-1}$ and suitable \emph{rational normal scrolls}. While the projective space depends only on its
genus $g$, it is also required to know the gonality of $C$ to embedding it in a suitable scroll. 
So we briefly recall some basic facts on scrolls.

Given $d>1$ non-negative integers $e_1\leq \dots\leq e_d$, set $e=\sum e_i$ and $N=e+d-1$. The 
rational normal scroll $S:=S(e_1,\dots,e_d)\subseteq\mathbb{P}^{N}$ is the projective variety that, 
after a choice of coordinates, is the set of points $(x_0:\dots: x_N)\in\mathbb{P}^N$ such that 
\begin{equation*}
\rank\left(
\begin{array}{ccc}
x_0 & \dots & x_{e_1-1} \\
x_1 & \dots & x_{e_1}
\end{array}\right.
\left.
\begin{array}{ccc}
x_{e_1+1} & \dots & x_{e_1+e_2} \\
x_{e_1+2} & \dots & x_{e_1+e_2+1}
\end{array}\right.
\left.
\begin{array}{c}
\dots  \\
\dots
\end{array}\right.
\left.
\begin{array}{ccc}
x_{N-e_d} &\dots & x_{N-1}  \\
x_{N-e_d+1} & \dots & x_{N}
\end{array}\right)\leq 1.
\end{equation*} By taking $d$ rational normal curves of degrees $e_1, \dots,e_d$ lying on $d$ complementary
linear spaces in $\mathbb{P}^N$, one can see that $S$ is the disjoint union  of all $(d-1)$-plane in $\mathbb{P}^N$
determined by choosing a point in each one of the $d$ rational normal curves. Hence, the dimension of $S$ is $d$ and its
degree is $e$. Each $(d-1)$-plane is called a ruling
of $S$. Additionally, $S$ is a smooth variety if, and only if,  $e_i>0$ for all $1\leq i\leq d$. 

On the other hand, taking the smooth variety 
$\mathbb{P}(\mathcal{E})$ where $\mathcal{E}=\O_{\mathbb{P}^1}(e_1)\oplus\dots\oplus\O_{\mathbb{P}^1}(e_d)$,
one can see that there is a birational morphism $\pi:\mathbb{P}(\mathcal{E})\rightarrow S$ induced by $\O_{\mathbb{P}(\mathcal{E})}(1)$,
and $\pi$ is a rational resolution of singularities of $S$. When $S$ is singular, it is a cone whose singular locus is a vertex $V$
of dimension $\#\{i\,;\,e_i=0\}-1$.
The birational morphism $\pi$ is such that each fiber of $\vartheta:\mathbb{P}(\mathcal{E})\rightarrow\mathbb{P}^1$ is mapped to a ruling.
Moreover, one can show that $\mathrm{Pic}(\mathbb{P}(\mathcal{E}))=\mathbb{Z}\tilde H\oplus\mathbb{Z}\tilde R$, where $[\tilde H]=[\O_{\mathbb{P}(\mathcal{E})}(1)]$ 
is a hyperplane section while $[\tilde R]:=[\vartheta^{*}(\O_{\mathbb{P}^1}(1))]$ is
a fiber class, satisfying
\begin{equation*}
{\tilde H}^d = e, \ \  {\tilde H}^{d-1}\tilde R=1,\ \ {\tilde R}^2 = 0 \ \mbox{and } K_{\mathbb{P}(\mathcal{E})}=(e-2)\tilde R-d{\tilde H}
\end{equation*} where $K_{\mathbb{P}(\mathcal{E})}$ is the canonical class of $\mathbb{P}(\mathcal{E})$. We also
fix the following notation
$$\O_{S}(aH+bR):=\pi_{*}\O_{\mathbb{P}(\mathcal{E})}(a\tilde H+b\tilde R),$$ where $[H]$ is a hyperplane class of $S$ and $[R]$ is a class of a ruling.
Note if $S$ is singular, then $\O_{S}(aH+bR)$ is a fractional ideal sheaf (or a divisorial sheaf) associated to a suitable Weil divisor of $S$, more precisely:
\begin{itemize}
\item if $\mathrm{codim}(V,S)>2$ then $\mathrm{Pic}(\mathbb{P}(\mathcal{E}))$ is isomorphic to the group $\mathrm{Cl}(S)$ of Weil divisors of $S$,
hence $\mathrm{Cl}(S)=\mathbb{Z}[H]\oplus\mathbb{Z}[R]$;
\item if $\mathrm{codim}(V,S)=2$ and $E$ stands the exceptional divisor of $\pi$, then sequence 
$$0\rightarrow\mathbb{Z}\xrightarrow{\cdot E} \mathrm{Pic}(\mathbb{P}(\mathcal{E}))\xrightarrow{\pi_{*}}\mathrm{Cl}(S)\rightarrow 0,$$
is exact, and in this case $E\sim \tilde H-e\tilde R$ and $\mathrm{Cl}(S)=\mathbb{Z}[R]$,
\end{itemize} c.f. \cite[Proposition 2.1 and Corollary 2.2]{Rita0}. We also can conclude that the dualizing sheaf $\omega_{S}$
is $\pi_{*}K_{\mathbb{P}(\mathcal{E})}=\O_{S}((e-2)R-dH)$.

In \cite{LMS19} the authors show that the \emph{canonical model} of any $k$-gonal singular curve 
$C$ can be embedded in a $(k-1)$-fold scroll $S$. In addition, they also show 
that the possible base points of a $\mathfrak{g}^1_k$ are exactly those lying on the vertex of $S$.
The \emph{canonical model} is defined in \cite{KM09}
and it coincides with the canonical curve in $\mathbb{P}^{g-1}$  when $C$ is Gorenstein.
Thus, it is only natural
to ask about the stability of the normal sheaves of canonical curves in $(k-1)$-fold scrolls.

It is very easy to see that a canonical smooth (or even a Gorenstein) curve is not a complete intersection in $\mathbb{P}^{g-1}$, 
provided that its genus is bigger than $5$. On the other hand, it is a hard problem to realize canonical (smooth) curves as complete intersection in known projective varieties. For instance, Mukai \cite{Muk95, Muk10} and Mukai \& Ide \cite{MukIde} realize the 
canonical model of smooth curves of small genus ($\leq 9$) as complete intersections in suitable known projective varieties, 
such as product of projective spaces, weighted projective spaces and Grassmannians.  Moreover, in \cite{Schr86} Schreyer 
proves that any tetragonal smooth curve is a complete intersection in its corresponding $3$-fold smooth Scroll $\mathbb{P}(\mathcal{E})$.  The next theorem extend the above cited result
due to Schreyer by allowing Gorenstein curves too.

\begin{teo}\label{intscroll}
A tetragonal Gorenstein curve is a complete intersection of
$Y_1\sim 2H-b_1R$ and $Y_2\sim 2H-b_2R$, with $b_1+b_2=e-2$, in the corresponding $3$-fold scroll $S=S(e_1,e_2,e_3)$.
\end{teo}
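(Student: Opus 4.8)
Here is the plan I would follow.

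The plan is to lift the statement to the $\mathbb{P}^2$-bundle $\mathbb{P}(\mathcal{E})$ resolving $S$, to produce there two relative conics whose complete intersection is $C$, and then to push everything down to $S$. Write $e=e_1+e_2+e_3$; by \cite{LMS19} the canonical model of the tetragonal Gorenstein curve $C$ lies on the $3$-fold scroll $S=S(e_1,e_2,e_3)\subseteq\mathbb{P}^{g-1}$ of degree $e=g-3$, and the possible base points of the $\mathfrak{g}^1_4$ lie over the vertex of $S$. Let $\pi\colon\mathbb{P}(\mathcal{E})\to S$ be the rational resolution, $\vartheta\colon\mathbb{P}(\mathcal{E})\to\mathbb{P}^1$ the bundle map with $\mathcal{E}=\mathcal{O}_{\mathbb{P}^1}(e_1)\oplus\mathcal{O}_{\mathbb{P}^1}(e_2)\oplus\mathcal{O}_{\mathbb{P}^1}(e_3)$, and let $\tilde C\subseteq\mathbb{P}(\mathcal{E})$ be the strict transform of $C$, which (citing \cite{LMS19,KM09}) we identify with $C$. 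The restriction $f:=\vartheta|_{\tilde C}\colon\tilde C\to\mathbb{P}^1$ is the degree-$4$ pencil; since $\mathbb{P}^1$ is a smooth curve and $C$ is Cohen--Macaulay, $f$ is finite and flat, and because $C$ is Gorenstein the relative dualizing sheaf is invertible, so every fibre $Z_t:=f^{-1}(t)$ is a Gorenstein scheme of length $4$. Finally $\mathcal{O}_{\mathbb{P}(\mathcal{E})}(\tilde H)|_{\tilde C}\cong\omega_C$, since $\pi|_{\tilde C}$ followed by $S\hookrightarrow\mathbb{P}^{g-1}$ is the canonical morphism of $C$.

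First I would push down, by $\vartheta$, the ideal sheaf sequence of $\tilde C\subseteq\mathbb{P}(\mathcal{E})$ twisted by $\mathcal{O}(2\tilde H)$. One has $\vartheta_{*}\mathcal{O}_{\mathbb{P}(\mathcal{E})}(2\tilde H)=\Sym^{2}\mathcal{E}$, a bundle of rank $6$ and degree $4e$ on $\mathbb{P}^1$, and $\vartheta_{*}(\mathcal{O}_{\tilde C}(2\tilde H))=f_{*}(\omega_C^{\otimes 2})$, a bundle of rank $4$ and degree $3g-7=3e+2$ (Riemann--Roch gives $\chi(\omega_C^{\otimes 2})=3g-3$, and a rank-$4$ bundle $W$ on $\mathbb{P}^1$ has $\chi(W)=\deg W+4$). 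The crucial claim is that the natural map $\Sym^{2}\mathcal{E}\to f_{*}(\omega_C^{\otimes 2})$ is surjective, i.e.\ $R^{1}\vartheta_{*}(\mathcal{I}_{\tilde C}(2\tilde H))=0$, where $\mathcal{I}_{\tilde C}$ is the ideal sheaf of $\tilde C$ in $\mathbb{P}(\mathcal{E})$. This is checked fibrewise: over $t$, the restriction of $\tilde H$ to $Z_t$ is $\omega_C|_{Z_t}$, so $Z_t\hookrightarrow\mathbb{P}(\mathcal{E}_t)=\mathbb{P}^2$ is the embedding given by a subsystem of the dualizing module of the length-$4$ Gorenstein scheme $Z_t$; such a $Z_t$ is non-degenerate in $\mathbb{P}^2$ and is cut out there by the pencil of conics through it --- indeed it is a complete intersection of two conics, this being the classification of canonically embedded length-$4$ Gorenstein zero-dimensional schemes (Casnati--Ekedahl; compare also Schreyer's analysis of tetragonal canonical curves) --- so $h^{1}(\mathbb{P}^2,\mathcal{I}_{Z_t}(2))=0$ for every $t$, giving the claim.

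Granting this, there is a short exact sequence $0\to\mathcal{K}\to\Sym^{2}\mathcal{E}\to f_{*}(\omega_C^{\otimes 2})\to 0$ on $\mathbb{P}^1$ with $\mathcal{K}=\vartheta_{*}(\mathcal{I}_{\tilde C}(2\tilde H))$ locally free of rank $2$ and degree $4e-(3e+2)=e-2$; by Grothendieck's splitting theorem $\mathcal{K}\cong\mathcal{O}_{\mathbb{P}^1}(b_1)\oplus\mathcal{O}_{\mathbb{P}^1}(b_2)$ with $b_1+b_2=e-2$. Each summand inclusion $\mathcal{O}_{\mathbb{P}^1}(b_i)\hookrightarrow\mathcal{K}\hookrightarrow\Sym^{2}\mathcal{E}=\vartheta_{*}\mathcal{O}_{\mathbb{P}(\mathcal{E})}(2\tilde H)$ corresponds, by the projection formula, to a nonzero section of $\mathcal{O}_{\mathbb{P}(\mathcal{E})}(2\tilde H-b_i\tilde R)$ vanishing on $\tilde C$, i.e.\ to an effective divisor $Y_i\sim 2\tilde H-b_i\tilde R$ with $\tilde C\subseteq Y_i$. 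A degree count shows that $Y_1$ and $Y_2$ meet properly, with $Y_1\cap Y_2$ of degree $(2\tilde H-b_1\tilde R)(2\tilde H-b_2\tilde R)\cdot\tilde H=4e-2(b_1+b_2)=2e+4=2g-2=\deg\tilde C$; since $Y_1\cap Y_2$ is a locally complete intersection (hence Cohen--Macaulay, without embedded points) containing the integral curve $\tilde C$ of the same degree, we get $Y_1\cap Y_2=\tilde C$, so $\mathcal{I}_{\tilde C}=\mathcal{O}_{\mathbb{P}(\mathcal{E})}(-Y_1)+\mathcal{O}_{\mathbb{P}(\mathcal{E})}(-Y_2)$ and $\tilde C$ is a complete intersection in $\mathbb{P}(\mathcal{E})$. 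That $b_1+b_2=e-2$ is confirmed independently by adjunction: $\omega_C=\omega_{\tilde C}=(\omega_{\mathbb{P}(\mathcal{E})}+Y_1+Y_2)|_{\tilde C}=(\tilde H+(e-2-b_1-b_2)\tilde R)|_{\tilde C}$, and since $\omega_C=\tilde H|_{\tilde C}$ while $\tilde R|_{\tilde C}$ has degree $4\neq 0$, necessarily $e-2-b_1-b_2=0$. Pushing forward by $\pi$ turns $Y_1,Y_2$ into Weil divisors of $S$ of classes $2H-b_1R$ and $2H-b_2R$ (the vertex of $S$ has codimension $\geq 2$, so $\pi_{*}$ respects divisor classes away from it), and $C=\pi(Y_1)\cap\pi(Y_2)$ in $S$, as wanted.

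The step I expect to be the main obstacle is the fibrewise surjectivity $\Sym^{2}\mathcal{E}\twoheadrightarrow f_{*}(\omega_C^{\otimes 2})$: it rests on the non-degeneracy of each fibre $Z_t$ of the tetragonal pencil inside the ruling $\mathbb{P}^2$, which is exactly where one must use both that $C$ is Gorenstein (so the fibres $Z_t$ are Gorenstein of length $4$, hence complete intersections of conics) and that its gonality is exactly $4$ (so that $\tilde C$ does not degenerate onto a proper subscroll); the structure theory of degree-$4$ Gorenstein covers is the natural tool here. The remaining technical points --- the identification $\tilde C\cong C$ and the validity of the pushforward statement on Weil divisors when $S$ is singular, as well as the properness of $Y_1\cap Y_2$ (which would otherwise again force a subscroll structure incompatible with gonality $4$) --- should be dealt with using \cite{LMS19,KM09}, and they are immediate when $S$ is smooth.
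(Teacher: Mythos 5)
Your overall strategy (realize $C$ as a complete intersection of two relative conics on the $\mathbb{P}^2$-bundle $\mathbb{P}(\mathcal{E})$ and push down to $S$) is the same as the paper's, which follows Schreyer; your direct computation of $\vartheta_{*}$ of the twisted ideal sequence is a clean substitute for citing Schreyer's Betti-number lemmas, and your numerology ($\deg\Sym^2\mathcal{E}=4e$, $\deg f_{*}\omega_C^{\otimes 2}=3e+2$, $\rank\mathcal{K}=2$, $\deg\mathcal{K}=e-2$) checks out. However, there is a genuine gap precisely where the paper's proof does its real work: the case where the $\mathfrak{g}^1_4$ has non-removable base points, equivalently where $C$ passes through the vertex of a singular scroll $S$. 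In that case the strict transform $\tilde C$ meets the fibres of $\vartheta$ in \emph{fewer} than $4$ points (the part of $C\cap(\mbox{ruling})$ supported at the vertex is lost under the strict transform), so $f=\vartheta|_{\tilde C}$ is not a degree-$4$ cover, $\tilde C\to C$ need not be an isomorphism, and all of your rank and degree bookkeeping collapses. The paper repairs this by replacing $\tilde C$ with an auxiliary reducible curve $C'$, the union of the lifting with suitable exceptional lines chosen so that $C'\cdot\tilde R=4$, proving the complete-intersection resolution for $C'$, and only then pushing forward (using $b_1,b_2\geq-1$ to control the direct images). You defer exactly this point to \cite{LMS19,KM09} as a ``technical point,'' but it is the crux of extending Schreyer's smooth result to Gorenstein curves, and those references do not supply the construction.

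A secondary imprecision: your fibrewise claim that a non-degenerate Gorenstein length-$4$ scheme $Z_t\subset\mathbb{P}^2$ is automatically a complete intersection of two conics is false as stated --- four reduced points with exactly three on a line are Gorenstein and non-degenerate, yet the conics through them all contain that line, so their base locus is one-dimensional. What is actually needed is that no length-$3$ subscheme of $Z_t$ lies on a line, and this is exactly what the paper proves using the geometric Riemann--Roch theorem for Gorenstein curves: a collinear length-$3$ subscheme of a ruling section would produce a $\mathfrak{g}^1_3$, contradicting tetragonality. You correctly flag that gonality exactly $4$ must enter here, but the justification you give (Gorenstein plus non-degenerate) does not close the argument; the same collinearity condition is also what you need to rule out a common component of $Y_1$ and $Y_2$, which your ``degree count'' by itself does not establish.
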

\begin{proof}
Let $C\subset\mathbb{P}^{g-1}$ be a tetragonal canonical Gorenstein curve and $S$
be the $3$-fold scroll where $C$ lies on it. Let us also take the $\mathbb{P}^2$-bundle $\mathbb{P}(\mathcal{E})$  
over $\mathbb{P}^1$, that is a resolution of singularities of $S$ by the birational morphism $\pi:\mathbb{P}(\mathcal{E})\rightarrow S$ with
exceptional divisor $F$.

Considering the pushforward $\pi_*$ , it is known that $\pi_{*}\O_{\mathbb{P}(\mathcal{E})}=\O_S$. Then we may consider
$\pi_{*}$ as a functor from the category of $\O_{\mathbb{P}(\mathcal{E})}$-modules to the
category of $\O_{S}$-modules. Since the scroll $S$ has only rational singularities, for every $j>0$ we get
\begin{equation}\label{derived}
R^{j}\pi_{*}\O_{\mathbb{P}(\mathcal{E})}(a\tilde{H}+b\tilde{R})=0, \ \mbox{whenever } a,b\in\mathbb{Z}\ \mbox{and}\ b\geq -1,
\end{equation}
where $R^{j}\pi_*$ denote the right derived functors, c.f. \cite{Vie77} and \cite[(3.5)]{Schr86}.

Let us first suppose that the $\mathfrak{g}^1_{4}$ on $C$ has no base points, then we assume that $C$ lies on $\mathbb{P}(\mathcal{E})$ just 
because
$C$ does not pass through the possible singular locus of $S$. In this way, $C$ is isomorphic to its lifting $C^{\prime}$ to
$\mathbb{P}(\mathcal{E})$. By abuse of notation, we use $C$ to denote the lifting of $C$ as well. Since $\mathbb{P}(\mathcal{E})$ is smooth, the result follows just like in \cite{Schr86}, in the 
following way. The pencil $\mathfrak{g}^1_4$ has no base points. So, if $D$ is a canonical divisor given by the intersection
of $C$ with a ruling, then the linear spam of any subscheme of $D$ with degree $3$ is not a straight line, otherwise we get a 
$\mathfrak{g}^1_3$. Hence, we are in the range of \cite[Lemma, item 3, pg.~119]{Schr86}. The geometric version of 
Riemann--Roch Theorem for Gorenstein curves assures that $C\subset\mathbb{P}(\mathcal{E})$ has constant betti numbers over 
$\mathbb{P}^1$, c.f. \cite[Prop. 4.3]{Schr86}. Finally, like in  \cite[Cor 4.4]{Schr86}, 
$C\subset\mathbb{P}(\mathcal{E})$ admits the following free resolution
\begin{equation}\label{freeresolution0}
0\rightarrow\O_{\mathbb{P}(\mathcal{E})}((e-2) \tilde R{-}4 \tilde H)\rightarrow
\O_{\mathbb{P}(\mathcal{E})}(b_1\tilde R{-}2 \tilde H)\oplus O_{\mathbb{P}(\mathcal{E})}(b_2 \tilde R{-}2 \tilde H)\rightarrow\O_{\mathbb{P}(\mathcal{E})}\rightarrow\O_{C}\rightarrow 0,
\end{equation}
where $b_1+b_2=e-2$ with $b_1,b_2\geq -1$. Taking the pushforward
$\pi_*$ on the above exact sequence \eqref{freeresolution0}, it follows from equation \eqref{derived} that we 
get a free resolution of $C$ on $S$, implying that $C$ is a complete intersection in $S$ determined by the two mentioned divisors in the statement.

Now let us assume that the pencil $\mathfrak{g}^1_4$ has base points, thus $C$ meets the vertex of $S$. 
We also may assume that the base points of $\mathrm{g}^1_{4}$ are singular points of $C$, otherwise
each such base point is removable and so $C$ is no longer tetragonal. The vertex $V$ is either a straight line or a point,
i.e. $\mathrm{codim}(V,S)\geq 2$. 

We first show that $C\subset S$ has constant betti-numbers over the rulings. 
So, let $D$ be a subcanonical divisor of degree $4$ that is the intersection of $C$ with a ruling. By considering $D$ as subscheme of dimension $0$,
by the construction of $S$ we get that the linear space spanned by $D$ in $\mathbb{P}^{g-1}$, say $\overline{D}$, is isomorphic to a $\mathbb{P}^2$.
Let $E\subset D$ be a zero dimensional subscheme of degree $3$.
If $E$ is contained in a straight line of $\mathbb{P}^2\cong\overline{D}$, then the geometric version of
the Riemann-Roch Theorem for Gorenstein curves implies that $E$ compounds a $\mathfrak{g}^1_3$, that is a contradiction
because $C$ is tetragonal. Hence, $E$ is not contained in a hyperplane of $\mathbb{P}^2$. 
Now, by virtue of \cite[Lemma 4.2]{Schr86}, 
the betti-numbers of $C$ are constants over the rulings, depending only on the gonality of $C$. 

Next we consider the lifting of $C$ to $\mathbb{P}(\mathcal{E})$ by the birational morphism $\pi$. Since the degree of the divisors
given by the intersection of the lifting with the fibers of $\mathbb{P}(\mathcal{E})$ may decrease, i.e. could be smaller than $4$, 
we then take a reducible curve $C^{\prime}$ that is the lifting of $C$ union with suitable exceptional
divisors $\mathbb{P}^1\cong E_i\subset E$ such that the intersection of $C^{\prime}$ with the fibers
is still $4$, see Figure \ref{cprime}.

\begin{figure}
\caption{Construction of $C^{\prime}$}
\centering
\label{cprime}
\includegraphics{figfinal.eps}
\end{figure}

By construction, the fibers of $\mathbb{P}(\mathcal{E})$ meet $C^{\prime}$ in
four points, satisfying the conditions of \cite[Lemma 4.2 (3)]{Schr86}, whose proof does not require a canonical curve, just a linear system over a curve in $\mathbb{P}(\mathcal{E})$. So $C^{\prime}$ has constant betti-numbers over the fibers that depend only on the number of
intersections of $C^{\prime}$ with the fibers, hence they are equal to the betti-numbers of $C\subset S$ over the rulings.
Furthermore, it is worth mention that we are also able to apply Schreyer's results \cite[Theorem 3.2]{Schr86} and  \cite[Corollary 4.4, item (i)]{Schr86} to the curve $C^{\prime}$. While the mentioned Theorem 3.2 just requires that $C^{\prime}$ has constant betti numbers over $\mathbb{P}^1$, the proof of \cite[Corollary 4.4, item (i)]{Schr86} follows directly
from \cite[Theorem 3.2]{Schr86}, depending only on the fact that $C^{\prime}$ is a curve with constant betti numbers over $\mathbb{P}^1$. Hence, we may conclude that
$\O_{C^{\prime}}$ admits a free resolution of the form
\begin{equation*}\label{freeresol1}
0\rightarrow\O_{\mathbb{P}(\mathcal{E})}((e-2)\tilde R{-}4\tilde H)\rightarrow
\O_{\mathbb{P}(\mathcal{E})}(b_1\tilde R{-}2\tilde H)\oplus O_{\mathbb{P}(\mathcal{E})}(b_2\tilde R{-}2\tilde H)\rightarrow\O_{\mathbb{P}(\mathcal{E})}\rightarrow\O_{C^{\prime}}\rightarrow 0,
\end{equation*}
where $b_1+b_2=e-2$ with $b_1,b_2\geq -1$. Taking the pushforward to $S$, and using again equation \eqref{derived}, we obtain that
\begin{equation*}
0\rightarrow\O_{S}((e-2)R-4H)\rightarrow\O_{S}(b_1R-2H)\oplus O_{S}(b_2R-2H)
\rightarrow\O_{S}\rightarrow\O_{C}\rightarrow 0
\end{equation*} 
is a free resolution of $\O_{C}$ and we are done.
\end{proof}

\section{The normal sheaf of a tetragonal curve}\label{normalsec}

If a Gorenstein curve $C$ has gonality $3$, then it is determined by a single divisor on the associated $2$-fold scroll, 
automatically implying that $\mathcal{N}_{C/S}$ is stable. The next step is then to consider the normal sheaf of tetragonal Gorenstein curves.

\begin{teo}\label{gmaior5}
Let $C$ be a tetragonal Gorenstein curve of genus $g\geq 5$ and $S$ its associated $3$-fold scroll. If $S$ is smooth, then $\mathcal{N}_{C/S}$ and $\mathcal{N}_{C/\mathbb{P}^{g-1}}$ are unstable.
\end{teo}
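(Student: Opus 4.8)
The plan is to promote Theorem~\ref{intscroll} to a splitting of $\mathcal{N}_{C/S}$ and then read off both instabilities from a slope count, with the adjunction formula of Lemma~\ref{dual-lci} supplying the missing determinant. Since $S$ is smooth its vertex is empty, so by \cite{LMS19} the $\mathfrak{g}^1_4$ is base-point-free and Theorem~\ref{intscroll} exhibits $C$ as an honest complete intersection $C=Y_1\cap Y_2$ of Cartier divisors $Y_i\in|2H-b_iR|$ on $S$, with $b_1+b_2=e-2$. Hence the conormal sheaf of $C$ in $S$ is $\bigl(\mathcal{O}_S(-Y_1)\oplus\mathcal{O}_S(-Y_2)\bigr)\otimes\mathcal{O}_C$ and
$$\mathcal{N}_{C/S}\;\cong\;L_1\oplus L_2,\qquad L_i:=\mathcal{O}_S(2H-b_iR)\otimes\mathcal{O}_C,$$
a rank-two locally free sheaf on $C$.

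Next I would do the intersection-theoretic bookkeeping on $S$. As $C$ is canonically embedded, $\mathcal{O}_S(H)\otimes\mathcal{O}_C=\omega_C$, so $H\cdot C=2g-2$; as the rulings cut out the $\mathfrak{g}^1_4$, we have $R\cdot C=4$; and the $3$-fold scroll of a tetragonal canonical curve has degree $e=g-3$ (see the introduction and \cite{LMS19}), whence $b_1+b_2=g-5$. Thus $\deg_C\bigl(\mathcal{O}_S(aH+bR)\otimes\mathcal{O}_C\bigr)=a(2g-2)+4b$, giving
$$\deg L_i=4g-4-4b_i,\qquad \mu(\mathcal{N}_{C/S})=\tfrac12\bigl(\deg L_1+\deg L_2\bigr)=2g+6.$$
For the projective embedding, $C$ is a complete intersection in $\mathbb{P}^{g-1}$ (being one in the smooth $S$), so Lemma~\ref{dual-lci} yields $\det\mathcal{N}_{C/\mathbb{P}^{g-1}}=\omega_C\otimes\omega_{\mathbb{P}^{g-1}}^{-1}\otimes\mathcal{O}_C=\mathcal{O}_C(g+1)$, hence $\mu(\mathcal{N}_{C/\mathbb{P}^{g-1}})=\dfrac{(g+1)(2g-2)}{g-2}=2g+4+\dfrac{6}{g-2}$.

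To finish, order the scrollar invariants so $b_1\le b_2$ and take the rank-one subsheaf $L_1\subset\mathcal{N}_{C/S}$. Then $\mu(L_1)=4g-4-4b_1=2g+6+2(b_2-b_1)$, so $\mu(L_1)\ge\mu(\mathcal{N}_{C/S})$, strictly once $b_1\ne b_2$; and $\mu(L_1)\ge 2g+6>2g+4+\frac{6}{g-2}=\mu(\mathcal{N}_{C/\mathbb{P}^{g-1}})$ for every $g\ge 6$, since $\mu(\mathcal{N}_{C/S})-\mu(\mathcal{N}_{C/\mathbb{P}^{g-1}})=\frac{2(g-5)}{g-2}$. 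As $S$ is smooth the usual sequence $0\to\mathcal{N}_{C/S}\to\mathcal{N}_{C/\mathbb{P}^{g-1}}\to\mathcal{N}_{S/\mathbb{P}^{g-1}}\otimes\mathcal{O}_C\to0$ gives inclusions $L_1\subset\mathcal{N}_{C/S}\subset\mathcal{N}_{C/\mathbb{P}^{g-1}}$, and $1<\rank\mathcal{N}_{C/\mathbb{P}^{g-1}}=g-2$, so $L_1$ destabilizes both sheaves. The single delicate point is the borderline case $b_1=b_2$ (possible only for odd $g\ge 7$, never for $g=6$, where $b_1+b_2=1$): there $\mathcal{N}_{C/S}\cong L_1^{\oplus2}$ is polystable, and since any rank-one subsheaf of a rank-two locally free sheaf injects into one of its summands, nothing destabilizes $\mathcal{N}_{C/S}$; so the instability of $\mathcal{N}_{C/S}$ really rests on excluding $b_1=b_2$, which I expect to be the main obstacle and which I would extract from the conic-bundle presentation of a tetragonal curve (or, failing that, from the genus-by-genus genericity hypothesis). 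The instability of $\mathcal{N}_{C/\mathbb{P}^{g-1}}$ needs none of this: it follows at once from $\mu(L_1)>\mu(\mathcal{N}_{C/\mathbb{P}^{g-1}})$, and that single numerical inequality is the only non-formal ingredient in the whole argument.
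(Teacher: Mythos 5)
Your argument is essentially the paper's: complete intersection in $S$ (Theorem~\ref{intscroll}) gives the splitting of $\mathcal{N}_{C/S}$, adjunction gives $\mu(\mathcal{N}_{C/S})=2g+6$, and the normal-bundle sequence gives $\mu(\mathcal{N}_{C/\mathbb{P}^{g-1}})=2(g-1)(g+1)/(g-2)<2g+6$ for $g\geq 6$; the paper destabilizes $\mathcal{N}_{C/\mathbb{P}^{g-1}}$ with the subsheaf $\mathcal{N}_{C/S}$ itself rather than with $L_1$, and computes $\deg\mathcal{N}_{C/\mathbb{P}^{g-1}}$ from the Euler-type sequence for $\mathcal{T}_S$ rather than by adjunction, but these are cosmetic differences. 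The ``delicate point'' you isolate ($b_1=b_2$) is not an obstacle here: the paper's Definition of stability requires the \emph{strict} inequality $\mu(\mathcal{G})<\mu(\mathcal{F})$, so ``unstable'' throughout the paper means ``not stable,'' and indeed the paper's own proof only establishes $\mu(\mathcal{N}_{Y_i/S})\geq 2g+6$ for some $i$ (compare the genus-$5$ case of Theorem~\ref{est-proj}, where equal slopes are declared unstable and the sheaf is then called polistable). So $\mu(L_1)\geq\mu(\mathcal{N}_{C/S})$ already suffices, and you have in fact proved the theorem; what you correctly identify is that the stronger claim of non-semistability of $\mathcal{N}_{C/S}$ would require excluding $b_1=b_2$, which neither you nor the paper does. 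One phrasing to fix: $C$ is \emph{not} a complete intersection in $\mathbb{P}^{g-1}$ (its ideal needs $\binom{g-2}{2}$ quadrics in codimension $g-2$); what you actually use, and what Lemma~\ref{dual-lci} requires, is only that $C$ is a \emph{local} complete intersection in $\mathbb{P}^{g-1}$, which does follow from $C=Y_1\cap Y_2$ inside the smooth $S$.
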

\begin{proof}
By virtue of Theorem \ref{intscroll}, $C$ is a complete intersection in $S$, say that $C$ is determined by the divisors $Y_1$ and $Y_2$. So the normal
sheaf $\mathcal{N}_{C/S}$ splits as a sum of two subsheaves, namely $\mathcal{N}_{C/S}=\mathcal{N}_{Y_1/S}\oplus\mathcal{N}_{Y_2/S}$. Since $\mathrm{rank}(\mathcal{N}_{Y_i/S})=1$, one of these two rank one subsheaves destabilize $\mathcal{N}_{C/S}$.

Now we move our attention to the normal sheaf $\mathcal{N}_{C/\mathbb{P}^{g-1}}$. In this case $C$ is no longer a 
complete intersection whenever $g\geq 6$. The way we choose to show that $\mathcal{N}_{C/\mathbb{P}^{g-1}}$ is also unstable,
requires to compute the degree of $\mathcal{N}_{C/S}$.

Let $H$ and $R$ be the hyperplane and the ruling sections of $S$. Since $C$ is a tetragonal canonical curve, $R\cdot C=4$ and $\deg(\omega_{C})=2g-2=H\cdot C$. Thus, by Lemma \ref{dual-lci} we get
\begin{eqnarray*}
\deg(\mathcal{N}_{C/S})=\deg(\omega_{C})-\deg(K_{S}\otimes\O_C)=
4H\cdot C-(g-5)R\cdot C=4g+12,
\end{eqnarray*} and so $\mu(\mathcal{N}_{C/S})=2g+6$. 
Since $C$ is a complete intersection in $S$ and $S$ is smooth, the following sequence is exact,
$$0\longrightarrow \mathcal{N}_{C/S}\longrightarrow \mathcal{N}_{C/{\mathbb{P}^{g-1}}}\longrightarrow
	\mathcal{N}_{S/{\mathbb{P}^{g-1}}}\otimes \mathcal{O}_C\longrightarrow0.$$ Taking degrees we get $\deg(\mathcal{N}_{C/\mathbb{P}^{g-1}})=\deg(\mathcal{N}_{C/S})+\deg(\mathcal{N}_{S/\mathbb{P}^{g-1}}\otimes\mathcal{O}_C)$. On the other hand, by the 
flatness of $\mathcal{N}_{S/{\mathbb{P}^{g-1}}}$, we get the exact
sequence $$0\longrightarrow \mathcal{T}_S\otimes \mathcal{O}_C\longrightarrow \mathcal{T}_{\mathbb{P}^{g-1}}\otimes \mathcal{O}_C\longrightarrow
\mathcal{N}_{S/{\mathbb{P}^{g-1}}}\otimes \mathcal{O}_C\longrightarrow0.$$
So we may conclude that
\begin{eqnarray*}
\deg(\mathcal{N}_{S/{\mathbb{P}^{g-1}}}\otimes \mathcal{O}_C)=\deg(\mathcal{T}_{\mathbb{P}^{g-1}}\otimes \mathcal{O}_C)-\deg(\mathcal{T}_S\otimes \mathcal{O}_C)= \\
-\deg(K_{\mathbb{P}^{g-1}}\otimes \mathcal{O}_C)+\deg(K_S\otimes \mathcal{O}_C)=g(2g-2)-2g-14=2(g^2-2g-7).
\end{eqnarray*}
Hence, $\deg(\mathcal{N}_{C/\mathbb{P}^{g-1}})=2(g-1)(g+1)$ and $\mu(\mathcal{N}_{C/\mathbb{P}^{g-1}})=2(g-1)(g+1)/(g-2)$, and then
$$2(g-1)(g+1)/(g-2)=\mu(\mathcal{N}_{C/\mathbb{P}^{g-1}})\leq\mu(\mathcal{N}_{C/S})=2(g+3)\ \mbox{for}\   \,g\geq 5,$$ finishing the proof.
\end{proof}
	
\medskip

We now move our attention to provide sufficient conditions to assure that the
$3$-fold scroll of a tetragonal Gorenstein curve of genus $6$ is smooth. It is known that the ideal of any nonhyperelliptic canonical Gorenstein curve is 
generated by quadratic forms, with the exception of a quintic plane curve of genus $6$, c.f. \cite[Section 3]{CF18}. 
Hence, the divisors $Y_i\in\mathrm{Div}(S)$ in the Theorem \ref{intscroll} are also given by quadratic forms on $S$, say
\begin{equation}
\mathfrak{Q}_i=\sum_{1\leq j\leq k\leq 3}P_{ijk}\sigma_j\sigma_k, \ \ (\mbox{with} \ i=1,2),
\end{equation}where we pickup suitable sections $\sigma_i \in H^0(S, \mathcal{O}_S(H-e_jR))$ corresponding to the
rational normal curves, and each $P_{ijk}$
is a homogeneous polynomial in $\k[s,t]$ of degree $e_j+e_k-b_i$.
\begin{lema}\label{biel-quintica}
Let $C$ be a tetragonal Gorenstein curve canonically embedded in $\mathbb{P}^{g-1}$.
If $S:=S(e_1,e_2,e_3)$ is the $3$-fold scroll where $C$ is a complete intersection of the quadratic forms $Y_1$ and $Y_2$
as above, then 
\begin{enumerate}[label=\textbf{(\alph*)}]
\item \label{itema1}$b_1\leq 2e_2$ and $b_2\leq 2e_3$;
\item \label{itema2}$e_3=b_2=0$ if, and only if,  $C$ is bi-elliptic;
\item \label{itema3}If $e_3=0$ and $b_2=-1$, then $C$ has a $\mathfrak{g}^2_5$.
\end{enumerate}
\end{lema}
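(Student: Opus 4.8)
The plan is to prove the three items in turn, using item (a) to cut down the numerical possibilities before attacking the degenerate configurations in (b) and (c). For (a) I would start from the fact that a monomial $P_{ijk}\sigma_{j}\sigma_{k}$ can occur in $\mathfrak{Q}_{i}$ only when $\deg P_{ijk}=e_{j}+e_{k}-b_{i}\ge 0$. Since $e_{j}+e_{k}\le 2e_{3}$ for every pair $j\le k$, if $b_{2}>2e_{3}$ then every coefficient of $\mathfrak{Q}_{2}$ vanishes, so $\mathfrak{Q}_{2}\equiv 0$ and $C=V(\mathfrak{Q}_{1})\cap S$ would be a surface, not a curve; hence $b_{2}\le 2e_{3}$. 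For $b_{1}\le 2e_{2}$ I argue by contradiction: if $b_{1}>2e_{2}$ then $e_{j}+e_{k}-b_{1}<0$ whenever $j,k\in\{1,2\}$, so $P_{111}=P_{112}=P_{122}=0$ and $\mathfrak{Q}_{1}=\sigma_{3}\cdot L_{1}$ with $L_{1}=P_{113}\sigma_{1}+P_{123}\sigma_{2}+P_{133}\sigma_{3}$; moreover $\mathfrak{Q}_{1}\ne 0$ (else $C$ is again a surface), so $L_{1}\ne 0$. Being integral and equal to $V(\sigma_{3}L_{1})\cap V(\mathfrak{Q}_{2})$, the curve $C$ lies in $V(\sigma_{3})$ or in $V(L_{1})$. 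The first is impossible, because $V(\sigma_{3})$ is the proper subscroll $S(e_{1},e_{2})$, spanning a proper linear subspace of $\mathbb{P}^{g-1}$, whereas the canonical curve is linearly normal and hence nondegenerate. The second is impossible, because $V(L_{1})$ meets a general ruling $\overline{D}\cong\mathbb{P}^{2}$ in a line, forcing the four points of $D=C\cap\overline{D}$ to be collinear, against the fact proved inside the proof of Theorem \ref{intscroll} that no three of these points lie on a line. Hence $b_{1}\le 2e_{2}$.

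For (b) and (c) the stated conditions force the scroll to be singular, so $S$ is the cone, with vertex $V$, over the smooth surface scroll $\overline{S}=S(e_{2},e_{3})$; in the genus-$6$ case that concerns us, part (a) combined with $b_{1}=e-2-b_{2}$ already pins down $\overline{S}=S(1,2)$, namely $\mathbb{P}^{2}$ blown up at a point. The common strategy has three steps. First, use $b_{2}\in\{0,-1\}$, together with $b_{1}+b_{2}=e-2$ and part (a), to read off which coefficients of $\mathfrak{Q}_{1}$ and $\mathfrak{Q}_{2}$ must vanish; after passing, when the $\mathfrak{g}^{1}_{4}$ has base points on $V$, to the reducible lift $C'$ from the proof of Theorem \ref{intscroll}, this controls the image $\overline{C}$ of $C$ under the projection $p\colon S\dashrightarrow\overline{S}$ away from $V$. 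Second, compute the class of $\overline{C}$ in $\mathrm{Cl}(\overline{S})=\mathbb{Z}[\overline{H}]\oplus\mathbb{Z}[\overline{R}]$ and push it along the contraction of the $(-1)$-section of $\overline{S}$: when $b_{2}=0$ one gets a degree-$2$ pencil on $C$, i.e.\ a $\mathfrak{g}^{1}_{2}$, and when $b_{2}=-1$ one gets the linear system of a quintic plane curve, i.e.\ a $\mathfrak{g}^{2}_{5}$. Third, for the converse in (b), reverse the argument: a $\mathfrak{g}^{1}_{2}$ on $C$ forces the canonical model onto a scroll of exactly this degenerate type, and then $e_{1}=0$ and $b_{2}=0$ follow from part (a) together with the uniqueness of the scroll of a $k$-gonal curve from \cite{LMS19}.

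The routine part of (a) presents no difficulty; the real obstacle is in (b) and (c). The two delicate points are: proving that $p$ induces a \emph{birational}, not merely finite, map $C\to\overline{C}$, equivalently that $C$ is not a nontrivial cover through the vertex of a curve of lower genus; and carrying out the class-and-degree bookkeeping on the singular cone $S$, where only $\mathbb{Z}[H]\oplus\mathbb{Z}[R]$ is available in place of a Picard group. In particular no divisor class of $S$ restricts to a degree-$5$ line bundle on $C$, so the $\mathfrak{g}^{2}_{5}$ of (c) has to be produced from the plane model of $\overline{C}$ rather than pulled back from $S$; that, together with transporting the whole discussion to $C'$ in the base-point case, is what needs the most care.
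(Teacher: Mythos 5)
Your factorization argument for $b_1\le 2e_2$ is the right idea and is essentially Brawner's (the paper proves item \ref{itema1} simply by citing \cite[Prop.~3.1]{Bra97} and observing that the argument uses only Riemann--Roch and irreducibility): one shows the conic splits off a linear factor, then rules out $C\subset V(\sigma)$ by nondegeneracy of the canonical curve and $C\subset V(L_1)$ by the general position of the degree-$4$ ruling divisors established in the proof of Theorem~\ref{intscroll}. The problem is your proof of the other inequality. It rests on the claim $e_j+e_k\le 2e_3$ for all $j,k$, i.e.\ on $e_3$ being the \emph{largest} invariant; but in this lemma and in its application (Lemma~\ref{scroll-suave}, where the scrolls are $S(3,0,0)$, $S(2,1,0)$, $S(1,1,1)$ and items \ref{itema2}, \ref{itema3} hypothesize $e_3=0$ to make $S$ a cone), $e_3$ is the \emph{smallest} invariant. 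Under that convention $b_2>2e_3$ forces only the single coefficient $P_{233}$ to vanish, $\mathfrak{Q}_2$ does not vanish identically, and your argument collapses; the inequality involving the minimal invariant needs a different idea (for instance, $b_1\ge b_2>2e_3$ kills both $P_{133}$ and $P_{233}$, so the minimal directrix $V(\sigma_1,\sigma_2)$ lies on $Y_1\cap Y_2=C$, which contradicts integrality when $e_3>0$ and requires separate care when $e_3=0$). Your sketch is also internally inconsistent on this point: in (b) and (c) you take the cone over $S(e_2,e_3)$, which presumes the vanishing invariant is $e_1$, while in (a) you assumed $e_3$ is maximal.

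For item \ref{itema2} you are aiming at the wrong target. A canonically embedded curve is non-hyperelliptic, so it cannot carry a $\mathfrak{g}^1_2$; the statement is a slip for ``$C$ is bielliptic'' (compare Definition~\ref{def1} and the paper's own proof, which concludes ``Hence $C$ is bi-elliptic''). The actual content, following \cite[Prop.~3.2]{Bra97}, is that the $\mathfrak{g}^1_4$ factors as $C\to E\to\mathbb{P}^1$ through the conic bundle $Y_1\to\mathbb{P}^1$, and the whole point is the computation showing that $E$ has arithmetic genus $1$ (the paper gets $2g(E)-2=0$); your projection-and-contraction plan never performs this genus computation, and if it produced a genus-$0$ intermediate curve it would prove something false. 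You also restrict (b) and (c) to genus $6$, whereas the lemma is stated for arbitrary $g$, and your converse in (b) is only a gesture at uniqueness of the scroll. For item \ref{itema3} the paper's route is different and much shorter: $b_2=-1$ gives $b_1=e-1$, and Schreyer's analysis \cite[pg.~128]{Schr86} then forces a $\mathfrak{g}^1_3$ or a $\mathfrak{g}^2_5$, the former being excluded by tetragonality; your plane-quintic projection is a plausible alternative for $g=6$ but is not carried out. In short, the self-contained approach is legitimate, but as written it has a genuine gap in the second inequality of (a) and proves the wrong statement in (b).
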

\begin{proof}

Item \ref{itema1} was proved by Brawner in \cite[Proposition 3.1]{Bra97} just applying
Riemann--Roch theorem and the irreducibility of $C$, so it works if $C$ is singular as well. Item \ref{itema2}
was also provide by Brawner, \cite[Proposition 3.2]{Bra97}, provided that $C$ is smooth. Here we just have to
make minor adjusts to reach the Gorenstein case.

Assuming that $e_3=0$,  $S$ is a cone over a scroll of dimension $2$, and
from $b_2=0$,  the intersection of $Y_1$ with a ruling is a conic.
Hence, the linear system $\mathfrak{g}^1_4$ is given by composition of two double covers
$C\rightarrow E\rightarrow \mathbb{P}^1$ and $Y_1$ is a birational ruled surface over 
$E$ with a rational curve $\tilde{E}$ of double points. Thus, intersecting $\tilde{E}$ with $Y_2$ we have that
$$2m=\tilde{E}\cdot (2H-b_2R)=2\deg(\tilde{E}),$$
and $m=0$ if, and only if, $C$ is smooth. The intersection of $H$ with $Y_1$ is a curve birational to $E$, and its arithmetic genus is $g(E)=1$, 
because
$$2g(E)-2=H(2H-b_1R)(f-2-b_1)R=0.$$
Hence, $C$ is bi-elliptic. The proof of the converse of item \ref{itema2} follows exactly as in \cite[Proposition 3.2]{Bra97}. It remains to prove item \ref{itema3}. Since $b_1=e-1$, $C$ admits a $\mathfrak{g}^1_3$ or a $\mathfrak{g}^2_5$. To see this, we just have to adapt the arguments in \cite[pg. 128]{Schr86} using that each $Y_i$ is given by a quadratic form in $S$, the intersection theory of $S$ and Riemann--Roch theorem for singular curves. By the hypothesis, $C$ does not admit a $\mathfrak{g}^1_3$, hence $C$ has a $\mathfrak{g}^2_5$. 
\end{proof}

\begin{defi}\label{def1}
A general curve of arithmetic genus $6$ is a non-bielliptic tetragonal integral Gorenstein curve not admitting a $\mathfrak{g}^2_5$.
\end{defi}

\begin{lema}\label{scroll-suave}
The corresponding scroll of a general curve $C$ of arithmetic genus $6$ is smooth.
\end{lema}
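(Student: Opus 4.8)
The plan is to reduce the lemma to a short numerical classification. By Theorem~\ref{intscroll} the curve $C$, being tetragonal and Gorenstein, is a complete intersection of $Y_1\sim 2H-b_1R$ and $Y_2\sim 2H-b_2R$ inside a $3$-fold rational normal scroll $S=S(e_1,e_2,e_3)$ of degree $e=e_1+e_2+e_3$, with $b_1+b_2=e-2$; moreover the free resolution \eqref{freeresol1} forces $b_1,b_2\geq-1$, and we may label $Y_1,Y_2$ so that $b_1\geq b_2$. For a tetragonal ($k=4$) curve of arithmetic genus $g=6$ the scroll has degree $e=g-k+1=3$, hence $b_1+b_2=1$, and the only possible scroll types are $S(1,1,1)$, $S(2,1,0)$ and $S(3,0,0)$. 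Since a scroll is smooth exactly when every $e_i>0$, it suffices to show that, for a general curve in the sense of Definition~\ref{def1}, the last two types cannot occur.

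The type $S(3,0,0)$ is eliminated purely numerically: two of the invariants vanish, so part \ref{itema1} of Lemma~\ref{biel-quintica} gives $b_1\leq 2e_2=0$ and $b_2\leq 2e_3=0$, whence $b_1+b_2\leq 0$, contradicting $b_1+b_2=1$. (In fact $S(3,0,0)$ occurs for no tetragonal Gorenstein curve of genus $6$, general or not.)

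For the type $S(2,1,0)$, whose vertex is a single point, part \ref{itema1} of Lemma~\ref{biel-quintica} gives $b_1\leq 2e_2=2$ and $b_2\leq 2e_3=0$, so together with $b_1,b_2\geq-1$ and $b_1+b_2=1$ we are left with exactly $(b_1,b_2)=(1,0)$ or $(b_1,b_2)=(2,-1)$, where $b_2$ is the invariant paired with the vanishing $e_3$. In the first case $e_3=b_2=0$, so part \ref{itema2} of Lemma~\ref{biel-quintica} applies and $C$ carries a $\mathfrak{g}^1_2$, contradicting the gonality being $4$; moreover the argument in its proof realizes $C$ as bielliptic, which is excluded by Definition~\ref{def1}. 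In the second case $e_3=0$ and $b_2=-1$, so part \ref{itema3} of Lemma~\ref{biel-quintica} produces a $\mathfrak{g}^2_5$ on $C$, again excluded by Definition~\ref{def1}. Hence neither singular type survives and $S=S(1,1,1)$, which is smooth.

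Since the argument is essentially bookkeeping, the point that needs care — and the one I would check most carefully — is making the numerical data fit together consistently: fixing the ordering convention on $(e_1,e_2,e_3)$ under which the bounds of part \ref{itema1} and the hypotheses ``$e_3=0$'' of parts \ref{itema2}--\ref{itema3} of Lemma~\ref{biel-quintica} are the correct ones, verifying that the list $S(1,1,1),\,S(2,1,0),\,S(3,0,0)$ is genuinely exhaustive for degree $3$, and confirming that the degenerate case $S(3,0,0)$ is ruled out by the class identity $b_1+b_2=e-2$ rather than by the finer surface geometry used in the proof of Lemma~\ref{biel-quintica}. With those points settled, the statement follows immediately from Theorem~\ref{intscroll}, Lemma~\ref{biel-quintica} and Definition~\ref{def1}.
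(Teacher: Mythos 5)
Your proof is correct and follows essentially the same route as the paper: list the three possible scroll types of degree $e=g-k+1=3$, then use parts \ref{itema1}--\ref{itema3} of Lemma~\ref{biel-quintica} together with Definition~\ref{def1} to exclude $S(2,1,0)$ and $S(3,0,0)$. Your purely numerical elimination of $S(3,0,0)$ (both $b_i\leq 0$ contradicts $b_1+b_2=1$) is in fact a cleaner bookkeeping of the case the paper dispatches parenthetically, but it is not a different argument.
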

\begin{proof}
The scroll $S$ can be of three types, namely $S(3,0,0)$, $S(2,1,0)$ or $S(1,1,1)$. 
If $C$ is in a scroll with $e_3=0$ (resp. $e_2=0$) then by item \ref{itema1} of Lemma \ref{biel-quintica} 
we get $b_2\leq0$. Hence, 
$b_2=0$ or $b_2=-1$. If $b_2=0$ (resp. $b_2=-1$), then by 
\ref{itema2} (resp. \ref{itema3}) of the Lemma \ref{biel-quintica} follows that the curve $C$ 
is bi-elliptic (resp. $C$ has a $\mathfrak{g}^2_5$). In both cases $C$
is not a general curve as in Definition \ref{def1}. Therefore,  $S=S(1,1,1)$. 
\end{proof}

\medskip

We finish this section pointing out two naive declinations of the above results.

\begin{cor}\label{est-scroll} 
The normal sheaf $\mathcal{N}_{C/S}$ of a general curve $C$ embedded in the associated scroll $S$ is 
stable for arithmetic genera $3$ and $4$, and unstable for $5$ and $6$.
\end{cor}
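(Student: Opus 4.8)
The plan is to split the four genera into two regimes. For $g=3$ and $g=4$ the associated scroll $S$ is a surface and $C$ is a single divisor on it, so $\mathcal{N}_{C/S}$ is a rank-one torsion-free sheaf and is stable for trivial reasons; for $g=5$ and $g=6$ the scroll is a threefold, $C$ is a complete intersection of two divisors in it by Theorem~\ref{intscroll}, and one of the two direct summands of $\mathcal{N}_{C/S}$ destabilizes it.

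For $g=3,4$ a non-hyperelliptic Gorenstein curve of that genus has gonality $3$ (it carries a $\mathfrak{g}^1_3$, e.g.\ $|\omega_C(-p)|$ for a smooth point $p$), so by \cite{LMS19} its canonical model lies on the $(k{-}1)=2$-fold scroll $S\subseteq\mathbb{P}^{g-1}$; hence $C$ is an effective divisor on the surface $S$. (For $g=3$ one has $S=S(0,1)=\mathbb{P}^2$ and $C$ a plane quartic, so this reduces to Example~\ref{exemplane}; for $g=4$, $S$ is the unique quadric surface through the canonical $C$ furnished by Petri's theorem for Gorenstein curves \cite{Sto93-2}, with $C\sim 3H$.) I would then observe that $\mathcal{O}_C$ is torsion-free because $C$ is integral, whence $\mathcal{N}_{C/S}=\mathcal{H}om_{\mathcal{O}_C}(\mathcal{I}_C/\mathcal{I}_C^2,\mathcal{O}_C)$ is torsion-free, and it has rank one since $C$ is a divisor in a surface. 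A nonzero torsion-free sheaf of rank one admits no nonzero coherent subsheaf of strictly smaller rank, so the (semi)stability inequality holds vacuously and $\mathcal{N}_{C/S}$ is stable.

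For $g=5,6$ the curve $C$ is tetragonal (this is the definition of a general curve of genus $5$, and is part of Definition~\ref{def1} in genus $6$), so by Theorem~\ref{intscroll} it is a complete intersection in the threefold scroll $S=S(e_1,e_2,e_3)$ of $Y_1\sim 2H-b_1R$ and $Y_2\sim 2H-b_2R$ with $b_1+b_2=e-2=g-5$ and $b_1,b_2\geq-1$; in particular $\mathcal{N}_{C/S}\cong\mathcal{O}_C(Y_1)\oplus\mathcal{O}_C(Y_2)$. Using $H\cdot C=\deg\omega_C=2g-2$ and $R\cdot C=4$ one gets $\deg\mathcal{O}_C(Y_i)=2(2g-2)-4b_i$, hence $\deg\mathcal{N}_{C/S}=4(2g-2)-4(g-5)=4g+12$ and $\mu(\mathcal{N}_{C/S})=2g+6$; for $g=6$, where $S$ is smooth by Lemma~\ref{scroll-suave}, the instability is already Theorem~\ref{gmaior5}. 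Since $b_1+b_2=g-5\leq 1$ and $b_1,b_2\geq-1$, the $b_i$ cannot both be positive, so $b_i\leq 0$ for some $i$; for that $i$,
$$\mu\!\left(\mathcal{O}_C(Y_i)\right)=2(2g-2)-4b_i\ \geq\ 2(2g-2)=4g-4\ \geq\ 2g+6=\mu(\mathcal{N}_{C/S}),$$
the last inequality because $g\geq 5$. Thus $\mathcal{O}_C(Y_i)\subsetneq\mathcal{N}_{C/S}$ is a rank-one subsheaf of slope at least $\mu(\mathcal{N}_{C/S})$, so $\mathcal{N}_{C/S}$ is not stable: when $g=5$ and $b_1=b_2=0$ both summands have slope exactly $\mu(\mathcal{N}_{C/S})$, so $\mathcal{N}_{C/S}$ is strictly semistable, and otherwise the chosen subsheaf strictly destabilizes.

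The delicate point I expect to dwell on is the splitting $\mathcal{N}_{C/S}\cong\mathcal{O}_C(Y_1)\oplus\mathcal{O}_C(Y_2)$ when $S$ is singular, which cannot be avoided for $g=5$ since no threefold scroll of degree $2$ is smooth. One should check that $C$ misses the vertex $V$ of $S$, i.e.\ that its $\mathfrak{g}^1_4$ is base-point free, so that $Y_1,Y_2$ are Cartier along $C$ and the conormal sheaf is honestly the restriction of $\mathcal{O}_S(-Y_1)\oplus\mathcal{O}_S(-Y_2)$; failing that, one repeats the computation on the resolution $\mathbb{P}(\mathcal{E})\to S$ exactly as in the proof of Theorem~\ref{intscroll}, where the complete-intersection structure and the relevant Betti numbers persist regardless of base points. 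The rest is the intersection-theoretic bookkeeping already carried out for Theorem~\ref{gmaior5}.
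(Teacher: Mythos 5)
Your proof is correct and follows exactly the route the paper intends (the corollary is stated there without proof, as a ``declination'' of the preceding results): rank-one torsion-freeness gives vacuous stability on the $2$-fold scroll for $g=3,4$, and for $g=5,6$ the splitting $\mathcal{N}_{C/S}\cong\mathcal{O}_C(Y_1)\oplus\mathcal{O}_C(Y_2)$ from Theorem~\ref{intscroll} together with $H\cdot C=2g-2$, $R\cdot C=4$ and $b_1+b_2=g-5$, $b_i\geq -1$ produces a rank-one summand of slope $\geq 2g+6=\mu(\mathcal{N}_{C/S})$. You are also right to flag that genus $5$ is not literally covered by Theorem~\ref{gmaior5}, since every $3$-fold scroll of degree $2$ is singular; your fallback of running the computation on $\mathbb{P}(\mathcal{E})$ (or noting that equality of slopes already contradicts stability when $b_1=b_2=0$) is the correct way to close that gap, and is a point the paper leaves implicit.
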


\begin{cor}\label{genero6}
The normal sheaf $\mathcal{N}_{C/{\mathbb{P}^{5}}}$ of a general curve $C$ of arithmetic genus $6$ is unstable.
\end{cor}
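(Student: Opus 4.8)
The plan is to read the statement off as the case $g=6$ of Theorem \ref{gmaior5}, once we have checked that a general canonical Gorenstein curve of genus $6$ — in the sense of Definition \ref{def1} — satisfies the two hypotheses needed there: that it is tetragonal, and that its associated $3$-fold scroll is smooth.

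First I would note that by Definition \ref{def1} such a curve $C$ is, by fiat, a tetragonal integral Gorenstein curve, so the gonality hypothesis of Theorem \ref{gmaior5} holds and Theorem \ref{intscroll} applies: $C$ is a complete intersection of two divisors $Y_1,Y_2$ in its associated $3$-fold scroll $S$. Next I would invoke Lemma \ref{scroll-suave}: precisely because $C$ is also assumed non-bielliptic and without a $\mathfrak{g}^2_5$, the three a priori possible scroll types $S(3,0,0)$, $S(2,1,0)$, $S(1,1,1)$ collapse to $S=S(1,1,1)$, which is smooth since all $e_i>0$. Thus both hypotheses of Theorem \ref{gmaior5} are met with $g=6$.

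With this in place, Theorem \ref{gmaior5} directly yields that $\mathcal{N}_{C/\mathbb{P}^{5}}$ is unstable. Concretely, specializing the slope computations in the proof of that theorem to $g=6$ gives $\mu(\mathcal{N}_{C/S})=2(g+3)=18$ whereas $\mu(\mathcal{N}_{C/\mathbb{P}^{5}})=2(g-1)(g+1)/(g-2)=35/2$, so the rank-$2$ subsheaf $\mathcal{N}_{C/S}\subset\mathcal{N}_{C/\mathbb{P}^{5}}$ (of rank $g-2=4$) has strictly larger slope and destabilizes the normal sheaf.

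Since every link in this chain is already established, there is no genuine obstacle here; the only point that requires care is to ensure that the curve in the statement is exactly the object to which Lemma \ref{scroll-suave} and Theorem \ref{gmaior5} apply — that is, that ``general canonical Gorenstein curve of genus $6$'' is understood through Definition \ref{def1}, so that the non-bielliptic and no-$\mathfrak{g}^2_5$ conditions, which are what force $S$ to be smooth, are part of the hypothesis. An unconditional statement would instead require separate treatment of the bielliptic locus and the plane quintics, where the scroll degenerates, but that lies outside the scope of this corollary.
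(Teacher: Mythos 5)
Your proposal is correct and is exactly the route the paper intends: the corollary is stated as an immediate consequence of Definition \ref{def1}, Lemma \ref{scroll-suave} (forcing $S=S(1,1,1)$, hence smooth) and Theorem \ref{gmaior5} applied with $g=6$. Your explicit slope check ($\mu(\mathcal{N}_{C/S})=18>35/2=\mu(\mathcal{N}_{C/\mathbb{P}^{5}})$) and the remark that the genericity hypotheses are precisely what rule out the degenerate scrolls are both accurate and consistent with the paper.
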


\section{Local-global method}\label{section3}

An important part of studying the stability of the normal sheaf lies in the computation of its degree. Since
here we assume Gorenstein singularities, this computation tends to become more involved when the genus grows. So
local methods can be useful, addressing to the study of the cotangent complex and suitable local invariants, like Deligne and Tjurina numbers. 
In the very beginning of this section we just fix some required notation for remaining of the paper.

Let $C\subset\mathbb{P}^r$ be a reduced projective curve defined over an algebraically closed field $\textbf{k}$ of
characteristic zero. When  
$C$ is not smooth, the fundamental sequence involving the tangent and the normal sheaves 
is no longer a short exact sequence, but it
extends to an exact sequence with four terms
\begin{equation}\label{seq-us}
0\rightarrow \mathcal{T}_{C}\rightarrow \mathcal{T}_{C/\mathbb{P}^r}\rightarrow \mathcal{N}_{C/\mathbb{P}^r}\rightarrow \mathrm{T}^1\rightarrow 0\,,
\end{equation}
where $\mathrm{T}^1:=\mathrm{T}^1_{C/\k}$ stands for the first cohomology module of the cotangent complex of $C$, see
Lichtenbaum--Schlessinger remarkable paper \cite{LS67} for the precise definitions. Furthermore, 
\cite[Lemma 3.1.2]{LS67} assures that the $\mathcal{O}_C$-module 
$\mathrm{T}^1$ can be taken as the cokernel of the map 
$\mathcal{T}_{C/\mathbb{P}^r}\rightarrow \mathcal{N}_{C/\mathbb{P}^r}$.

Given a singular point $P\in C$, $\tau_P:=\dim_k(\mathrm{T}^1_{P})$ stands for the 
\emph{Tjurina number} at $P\in C$, see \cite{Greuel} for the connection between the Tjurina algebra and the
cotangent complex of an analytic germ. As an immediate consequence of Lemma \ref{lema-deg}, we get
$$\deg(\mathrm{T}^1)=\sum_{P\in \mathrm{Sing}(C)}\tau_{P}=:\tau,$$
that is the so-called  \textit{Tjurina number of $C$}. We also recall that the 
\textit{singularity degree of $C$ at $P$} is defined by $\delta_P:=\dim_{\k} \overline{\mathcal{O}}_{C,P}/\mathcal{O}_{C,P}$, where $\overline{\mathcal{O}}_{C,P}$ is the normalization of $\O_{C,P}$, and then we set
$\delta:=\sum_{P\in C}\delta_P$, the singularity degree of $C$.

We also consider the skyscraper sheaf $\mathfrak{D}_C$ such that its stalk at any $P\in C$ is
\begin{equation}\label{Ddefi}
\mathfrak{D}_{C,P}=\mathrm{coker}(\mathrm{Der}_{\k}(\mathcal{O}_{C,P},\mathcal{O}_{C,P})
\rightarrow\mathrm{Der}_{\k}(\overline{\mathcal{O}}_{C,P},\overline{\mathcal{O}}_{C,P})).
\end{equation}
Here we assume that the ground field has characteristic zero in order that the above map is an inclusion
between the derivations modules, and so we fix\footnote{The number $\theta_P$ is usually detonated by $m_1$, it was introduced
by Deligne in \cite{Del73} when dealing with only one singular point, we changed the notation just to avoid multiples sub-indexes.} $$\theta_{P}:=\dim_{\k}\mathfrak{D}_{C,P} \ \ \mbox{and} \ \ \theta:=\sum_{P\in\mathrm{Sing}(C)}\theta_{P}.$$
Finally, we also consider the \emph{Deligne numbers}: $$e_P:=3\delta_P-\theta_P \ \ \mbox{and} \ \ e:=\sum_{P\in\mathrm{Sing}(C)}e_p.$$

\begin{teo}\label{degnormal}
Let $C\subset\mathbb{P}^r$ be an integral projective curve of degree $d$ whose arithmetic genus is $g$. 
Then the degree of the normal sheaf of $C$ in $\mathbb{P}^r$ is given by
\begin{equation}\label{deg-normal}
\deg(\mathcal{N}_{C/\mathbb{P}^r})=2g-2+(r+1)d+\tau-e. 
\end{equation}
\end{teo}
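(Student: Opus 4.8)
The key is to compute each term in the four–term exact sequence \eqref{seq-us} separately, take degrees, and use additivity from Lemma \ref{lema-deg}\ref{item3}. From \eqref{seq-us} I would write
$$\deg(\mathcal{N}_{C/\mathbb{P}^r}) = \deg(\mathcal{T}_{C/\mathbb{P}^r}) - \deg(\mathcal{T}_C) + \deg(\mathrm{T}^1),$$
so the problem splits into three pieces. The last one is already known: by the discussion just above the theorem, $\deg(\mathrm{T}^1)=\tau$. So the real work is identifying $\deg(\mathcal{T}_{C/\mathbb{P}^r})$ and $\deg(\mathcal{T}_C)$.

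For the restricted tangent sheaf $\mathcal{T}_{C/\mathbb{P}^r}=\mathcal{T}_{\mathbb{P}^r}\otimes\mathcal{O}_C$, I would restrict the Euler sequence $0\to\mathcal{O}_{\mathbb{P}^r}\to\mathcal{O}_{\mathbb{P}^r}(1)^{\oplus(r+1)}\to\mathcal{T}_{\mathbb{P}^r}\to 0$ to $C$; since $\mathcal{O}_{\mathbb{P}^r}$ restricts to $\mathcal{O}_C$ and $\mathcal{O}_{\mathbb{P}^r}(1)$ to a degree-$d$ line bundle, the sequence stays exact and additivity of degree gives $\deg(\mathcal{T}_{C/\mathbb{P}^r})=(r+1)d$. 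For the tangent sheaf $\mathcal{T}_C=\mathcal{H}om(\Omega_C,\mathcal{O}_C)=\mathrm{Der}_{\k}(\mathcal{O}_C,\mathcal{O}_C)$ of the (singular) curve, the plan is to compare it with the tangent sheaf of the normalization $\nu:\widetilde{C}\to C$. The sheaf $\mathcal{T}_{\widetilde C}$ has degree $2-2g(\widetilde C)=2-2(g-\delta)$, and the natural map relating $\mathcal{T}_C$ to $\nu_*\mathcal{T}_{\widetilde C}$ has cokernel supported at the singular points, whose length is exactly the Deligne-type invariant: locally $\mathrm{Der}_{\k}(\overline{\mathcal{O}}_{C,P},\overline{\mathcal{O}}_{C,P})/\mathrm{Der}_{\k}(\mathcal{O}_{C,P},\mathcal{O}_{C,P})$ has dimension $\theta_P$. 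Combining, $\deg(\mathcal{T}_C)=2-2g+2\delta-\theta = 2-2g+e$, using $e=3\delta-\theta$ — wait, that gives $2\delta-\theta$, not $3\delta-\theta$; the discrepancy $\delta$ must come from the fact that $\chi(\mathcal{O}_C)$ and $\chi(\mathcal{O}_{\widetilde C})$ differ by $\delta$, which affects the degree normalization in \eqref{defdeg}. So the careful bookkeeping is: $\deg(\mathcal{T}_C)=\chi(\mathcal{T}_C)-(1-g)$, and I must track how $\chi$ changes through $\nu_*$ together with the length-$\delta$ and length-$\theta$ corrections, which is where the Deligne number $e=3\delta-\theta$ emerges naturally.

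Assembling the three contributions:
$$\deg(\mathcal{N}_{C/\mathbb{P}^r}) = (r+1)d - (2-2g+e) + \tau = 2g-2+(r+1)d+\tau-e,$$
which is \eqref{deg-normal}. I would then sanity-check on the smooth case ($\delta=\theta=\tau=0$, $e=0$): this recovers the classical $\deg(\mathcal{N}_{C/\mathbb{P}^r})=2g-2+(r+1)d$, and on a plane curve of degree $d$ ($r=2$, smooth) it gives $d(d-3)+3d=d^2$, matching $\mathcal{O}_C(d)$ as in Example \ref{exemplane}.

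\textbf{Main obstacle.} The delicate point is the precise degree computation of $\mathcal{T}_C$ for a singular curve, specifically getting the coefficient of $\delta$ right so that the combination with $\theta$ produces exactly the Deligne number $e=3\delta-\theta$. This requires carefully distinguishing the sheaf-theoretic degree (which is normalized by $\chi(\mathcal{O}_C)$, hence by $g$) from naive length counts, and correctly handling the exact sequence $0\to\mathcal{T}_C\to\nu_*\mathcal{T}_{\widetilde C}\to\mathfrak{D}_C\to 0$ (or its appropriate variant) together with $0\to\mathcal{O}_C\to\nu_*\mathcal{O}_{\widetilde C}\to\mathfrak{C}\to 0$ where $\mathfrak{C}$ has length $\delta$. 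One must also confirm that the characteristic-zero hypothesis guarantees the relevant derivation map is injective so that $\mathfrak{D}_C$ is genuinely the cokernel, as asserted in the setup. Everything else — the Euler sequence restriction and the identification $\deg(\mathrm{T}^1)=\tau$ — is routine given the results already established.
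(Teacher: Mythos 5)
Your proposal is correct and follows essentially the same route as the paper: take degrees along the four-term sequence \eqref{seq-us}, use $\deg(\mathrm{T}^1)=\tau$ and $\deg(\mathcal{T}_{C/\mathbb{P}^r})=(r+1)d$ (the paper gets the latter via $\det\mathcal{T}_{\mathbb{P}^r}=\omega_{\mathbb{P}^r}^{\vee}$ rather than the Euler sequence, an immaterial difference), and compute $\deg(\mathcal{T}_C)=\deg(\mathcal{T}_{\widetilde C})+\delta-\theta=2-2g+e$ by comparing with the normalization. Your diagnosis of the $2\delta$ versus $3\delta$ discrepancy is exactly right — the extra $\delta$ comes from the degree being normalized by $\chi(\mathcal{O}_C)=1-g$ rather than $1-\widetilde g$ — and is precisely the bookkeeping the paper performs (note the paper's intermediate line ``$\deg(\mathcal{T}_C)=2-2g-e$'' is a sign typo for $2-2g+e$, as your assembly confirms).
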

\begin{proof}

We start by taking the Euler characteristic in the sequence \eqref{seq-us},
\begin{equation*}\label{eulerchar}
\chi(\mathcal{T}_{C})-\chi(\mathcal{T}_{C/\mathbb{P}^r})+\chi(\mathcal{N}_{C/\mathbb{P}^r})-\chi(\mathrm{T}^1)=0.
\end{equation*}
\noindent The sheaf of differentials $\Omega_{C|\k}$ of $C$ may not be torsion free, but the tangent 
sheaf $\mathcal{T}_{C}$ is a coherent fractional ideal sheaf, so the singular version of the Riemann--Roch Theorem ensures that
$$\chi(\mathcal{T}_{C})=\deg(\mathcal{T}_{C})+1-g.$$ Let $\widetilde{C}$ be the non-singular 
model of $C$ and $\widetilde{g}$ its geometric genus. The 
bundle of differentials $\Omega_{\widetilde{C}|\k}$ is invertible 
of degree $2\widetilde{g}-2$, where $\widetilde{g}=g-\delta$. Thus, $\mathcal{T}_{\widetilde{C}}$ is 
an invertible sheaf of degree $2-2\widetilde{g}$ and so
\begin{equation*}\label{eq1}
\deg(\mathcal{T}_{C})=\deg(\mathcal{T}_{\widetilde{C}})-\theta+\delta=2-2g-e.
\end{equation*}

Since $\mathcal{N}_{C/\mathbb{P}^r}$ and $\mathcal{T}_{C/\mathbb{P}^r}$ have
rank $r-1$ and $r$, respectively, the sequence \eqref{seq-us} and Lemma \ref{lema-deg} imply
that $\deg(\mathcal{T}_{C})-\deg(\mathcal{T}_{C/\mathbb{P}^r})+\deg(\mathcal{N}_{C/\mathbb{P}^r})-\tau=0,$
and so 
\begin{equation*}\label{quasela1}
	\deg(\mathcal{N}_{C/\mathbb{P}^r})-\deg(\mathcal{T}_{C/\mathbb{P}^{r}})= 2g-2+\tau-e.
\end{equation*}
Now we just have to compute de degree of the relative tangent sheaf $\mathcal{T}_{C/\mathbb{P}^r}$, that is given by
$\det(\mathcal{T}_{\mathbb{P}^r}\otimes \mathcal{O}_C)=\det(\mathcal{T}_{\mathbb{P}^r})\otimes
\mathcal{O}_C=\omega_{\mathbb{P}^r}^{\vee}\otimes \mathcal{O}_C= \mathcal{O}_{\mathbb{P}^r}(r+1)\otimes \mathcal{O}_C,$
where $i$ is embedding $i:C\hookrightarrow \mathbb{P}^r$. Hence,
$$\deg(\mathcal{O}_{\mathbb{P}^r}(r+1)\otimes \mathcal{O}_C)=\deg(i^*(\mathcal{O}_{\mathbb{P}^r}(r+1)))=(\deg i)\deg(\mathcal{O}_{\mathbb{P}^r}(r+1))=d(r+1),$$
that concludes the proof.
\end{proof}

\begin{exam}\label{exemplo}
The above formula for the degree of the normal sheaf can be very manageable for suitable classes of curves. For example,
let $\N\subset\mathbb{N}$ be a numerical semigroup of genus $g:=\#(\mathbb{N}\setminus\N)>1$ generated by
$a_1,\dots, a_r$, and $$C_{\N}:=\{(t^{a_1},\dots,t^{a_r})\,;\,t\in\k\}=\Spec \k[\N]\subset\mathbb{A}^{r}$$ be the affine monomial curve associated to $\N$. 
Its is very known that $C_{\N}$ has a unique unibranch singular point $Q=(0,\dots,0)$. Equivalently, the semigroup algebra $\k[\N]$ corresponds uniquely
to an equisingular class of an (analytic germ when $\k=\mathbb{C}$) algebra
of a branch whose singular semigroup is $\N$, see \cite[Corollary 1.2.4 pg. 117]{Z06}. The affine curve $C_{\N}$ is Gorenstein if,
and only if, $\N$ is symmetric, i.e. the largest gap of $\N$ is the biggest possible, $\ell_{g}=2g-1$.

We associated to $C_{\N}$ a projective curve in $\mathbb{P}^r$ by adding just one smooth point at the infinite. Here, we use the same
notation for this projective curve and the affine one. Assuming that $\N$ is symmetric and $2\notin\N$, $C_{\N}$ can be considered as a canonical Gorenstein curve,
i.e. a curve of genus $g$ and degree $2g-2$ in $\mathbb{P}^{g-1}$. So 
all the invariants that appear in formula \eqref{deg-normal} of Theorem \ref{degnormal} are known, with the possible exception of the Tjurina number, namely  $3\delta=3g$, $(r+1)d=g(2g-2)$ and
$\theta=\theta_Q=1+g-\lambda(\N)=g$, where $\lambda(\N):=[\mathrm{End}(\N):\N]=1$ because $\N$ is symmetric, and
by the very definition, $\mathrm{End}(\N)$ is the set of all $n\in\mathbb{N}$ such that
$n+s\in \N\,\forall\,s\in S\setminus\{0\}$.
Hence, $$\deg(\mathcal{N}_{C_{\N}/\mathbb{P}^{g-1}})=g(2g-2)+\tau-2.$$

\noindent Unfortunately, we do not know a general formula for the Tjurina number $\tau$ depending only on the genus of $C_{\N}$, but there is an
implementable method to compute it. We first recall a result due to Herzog \cite{Her70} assuring that the ideal of $C_{\N}$ can be generated 
by isobaric polynomials $F_i$ that are differences of two monomials, namely
$$F_i:=X_{1}^{\alpha_{i1}}\dots X_{r}^{\alpha_{ir}}-X_{1}^{\beta_{i1}}\dots X_{r}^{\beta_{ir}},$$
with $\alpha_i\cdot\beta_i=0$. As usual, the weight of $F_i$ is $d_i:=\sum_j n_j\alpha_{ij}=\sum_j n_j\beta_{ij}$.
For each $i$, let $v_i:=(\alpha_{i1}-\beta_{i1},\dots,\alpha_{ir}-\beta_{ir})$
be a vector in $\k^{r}$ induced by $F_i$. Next a result due to Buchweitz, c.f.  \cite[Thm. 2.2.1]{Bu80}, computes the Tjurina number
for monomial curves.
\begin{teo}[Buchweitz]\label{t1busc}
$\tau=\sum_{s\in\mathbb{Z}}\dim_{\k}T^1_s$, where for each $\l\notin\mathrm{End}(\N)$, 
$$\dim T^{1}_{\l}=\#\{i\in\{1,\dots,r\}\,;\,n_i+\l\notin\N\}-\dim V_{\l}-1,$$
$V_{\l}$ is the subvector space of $\k^{r}$ generated by the vectors $v_i$
such that $d_i+\l\notin\N$. We also have that
$$\dim T^{1}_s=0,\ \forall\,s\in\mathrm{End}(\N).$$
\end{teo}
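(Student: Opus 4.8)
The statement is a structural description of the graded pieces of $T^1$ for a monomial curve, so the plan is to work entirely in the category of graded $\k[\N]$-modules and compute $T^1_{C_\N/\k}$ degree by degree using the Lichtenbaum--Schlessinger construction. First I would set up the presentation: write $R = \k[X_1,\dots,X_r]$ graded by $\deg X_i = n_i$, let $I = (F_1,\dots,F_r)$ be Herzog's binomial generators (using $a_1,\dots,a_r$ minimal, so there are exactly $e := \operatorname{edim}-1$... more precisely $r$ generators in the relevant range), and let $\k[\N] = R/I$. Since $\k[\N]$ is a one-dimensional domain and a quotient of a polynomial ring, the cotangent complex truncates and $T^1 = \operatorname{coker}\big(\operatorname{Der}_\k(R)\otimes_R \k[\N] \xrightarrow{\ d\ } \operatorname{Hom}_R(I/I^2,\k[\N])\big)$, or equivalently the usual $\operatorname{Hom}_{\k[\N]}(I/I^2,\k[\N])$ modulo the image of the Jacobian/normal-bundle map. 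This whole diagram is $\mathbb{Z}$-graded because all the $F_i$ are isobaric, so $T^1 = \bigoplus_{\l\in\mathbb{Z}} T^1_\l$.

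Next I would compute each summand. A homomorphism $\varphi\colon I/I^2 \to \k[\N]$ of degree $\l$ is determined by the images $\varphi(F_i) \in \k[\N]_{d_i+\l}$, subject to the linear Koszul-type relations coming from $I/I^2$ (the relations $\sum_i g_i F_i = 0$ in $R$). The key observation, which is really the heart of the argument, is the combinatorial description of $\k[\N]$ in degree $m$: $\dim_\k \k[\N]_m$ is $1$ if $m\in\N$ and $0$ otherwise (monomials $t^m$). So $\varphi(F_i)$ is forced to be $0$ whenever $d_i+\l\notin\N$, and is a free scalar parameter whenever $d_i+\l\in\N$. Counting the free parameters before imposing relations gives $\#\{i : d_i+\l\in\N\}$; but I want this phrased via the complement, and here one uses that for the binomial $F_i = X^{\alpha_i} - X^{\beta_i}$ the two monomials have the same weight $d_i$, while $n_i + \l \notin\N$ is the condition that actually controls which ambient coordinate directions contribute — so I would prove the identity $\#\{i : d_i+\l\notin\N\} = \#\{i : n_i+\l\notin\N\}$ via the structure of Herzog's relations, or more likely just track the normal/derivation side directly in these terms. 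The derivations $\operatorname{Der}_\k(R)\otimes\k[\N]$ are spanned by $t\,\partial/\partial X_i$ in degree $\l$ precisely when there is a nonzero such thing, i.e. when $n_i+\l$ lies in $\N$; the image of the map $d$ contributes a subspace of dimension $\#\{i : n_i+\l\in\N\}$ minus the kernel, and the kernel in degree $\l$ is exactly $\operatorname{Der}_\k(\k[\N])_\l = \langle t^{n+1}\partial/\partial t : n+\l\in\N\text{-ish}\rangle$ whose dimension over the relevant range is $1$ (the Euler-type derivation, reflecting $\lambda(\N)=1$-style normality counting).

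Assembling: for $\l\in\operatorname{End}(\N)$ the deformation is trivial in that degree (one shows $\varphi$ is then necessarily in the image of $d$, because $\l+\N\subseteq\N$ makes every candidate liftable), giving $\dim T^1_\l = 0$; for $\l\notin\operatorname{End}(\N)$ one gets $\dim T^1_\l = \#\{i : n_i+\l\notin\N\} - \dim V_\l - 1$, where $V_\l = \langle v_i : d_i+\l\notin\N\rangle$ records exactly the relations among the free $\varphi(F_i)$ that survive (a relation $\sum g_i F_i = 0$ imposes a constraint only when the corresponding monomial degree lands outside $\N$, and its linear part is governed by the exponent-difference vectors $v_i$), and the $-1$ is the contribution of the Euler derivation in the kernel of $d$. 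Summing over $\l$ gives $\tau = \sum_{\l} \dim_\k T^1_\l$, which is the claim. I expect the main obstacle to be the bookkeeping in the middle step — pinning down precisely that the relation module $I/I^2$ contributes, in degree $\l$, a space of constraints whose dimension is $\dim V_\l$ (no more, no less), which requires knowing the minimal free resolution of $\k[\N]$ well enough to see that the "higher" Koszul syzygies do not interfere in the graded pieces we care about; this is exactly where one invokes the detailed structure in Buchweitz's original computation \cite{Bu80} rather than re-deriving it.
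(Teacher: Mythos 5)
The paper does not prove this statement at all: it is quoted verbatim from Buchweitz \cite[Thm.\ 2.2.1]{Bu80} as a known result, so there is no in-paper argument to compare yours against. Judged on its own terms, your sketch has the right general frame (grade the Lichtenbaum--Schlessinger presentation $T^1=\operatorname{coker}(\operatorname{Der}_\k(R)\otimes A\to\operatorname{Hom}(I/I^2,A))$ by the semigroup weight and compute degree by degree), but the middle step --- the computation of $\operatorname{Hom}(I/I^2,A)_\l$ --- is where your bookkeeping goes wrong. You parametrize a degree-$\l$ homomorphism by one free scalar per generator $F_i$ with $d_i+\l\in\N$ and then propose to impose the syzygy constraints afterwards; you also propose the identity $\#\{i: d_i+\l\notin\N\}=\#\{i: n_i+\l\notin\N\}$ to switch between the two index sets. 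That identity is false in general: the left side runs over the ideal generators $F_i$ (of which there are typically many more than $r$; your claim that $I$ has exactly $r$ generators is already incorrect, e.g.\ $\N=\langle 4,5,6,7\rangle$ needs six binomials) while the right side runs over the $r$ coordinate directions, and the two counts are unrelated. Moreover $V_\l$ is not "the relations among the free $\varphi(F_i)$ that survive''; it is the span of the exponent-difference vectors $v_i$ of those $F_i$ whose target degree $d_i+\l$ falls \emph{outside} $\N$.

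The step you defer to the reference ("knowing the minimal free resolution well enough to see that higher Koszul syzygies do not interfere'') is precisely the step the actual proof is designed to avoid. Since $A=\k[\N]$ is a domain and $C_\N$ is smooth off the origin, $\operatorname{Hom}_A(I/I^2,A)$ is torsion free and embeds into $\operatorname{Hom}_Q(I/I^2\otimes Q,Q)\cong Q^{r-1}$, where $Q=\k(t)$; via the conormal sequence a degree-$\l$ element is given by a vector $c\in\k^r$ (one entry per variable, through the Jacobian: $\varphi(F_i)=\langle c,v_i\rangle\,t^{d_i+\l}$) modulo the Euler vector $(n_1,\dots,n_r)$, and no analysis of syzygies is ever needed because the map is a restriction of a genuine homomorphism over $Q$. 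The integrality condition $\varphi(F_i)\in A$ then forces $\langle c,v_i\rangle=0$ exactly when $d_i+\l\notin\N$, giving $\dim\operatorname{Hom}(I/I^2,A)_\l=r-1-\dim V_\l$; subtracting $\dim(\operatorname{Der}_\k(R)\otimes A)_\l=\#\{j:n_j+\l\in\N\}$ and adding back the kernel $\operatorname{Der}_\k(A)_\l$ (which is one-dimensional precisely when $\l\in\operatorname{End}(\N)$ --- this is where your "$-1$'' and the vanishing for $\l\in\operatorname{End}(\N)$ both come from) yields the stated formula. I would rewrite your middle paragraph along these lines; as it stands the argument does not close.
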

\noindent Finally, by the very explicit and implementable method in \cite{CF18} and \cite{CS13}, we know that the ideal of $C_{\N}\subseteq\mathbb{P}^{g-1}$
is given by suitable $\frac{1}{2}(g-3)(g-2)$ quadratic and isobaric forms, when the first non zero element $n_1$ of $\N$ is such
that $3<n_1\leq g-1$ and $\N\neq <4,5>$, and by $\frac{1}{2}(g-3)(g-2)$ quadratic and isobaric forms added to
$\binom{g+2}{3}-5g+5$ cubic and isobaric forms in the remaining cases. In this way, one may implement an algorithm to compute the Tjurina
number $\tau$ in all this cases.  To conclude this example, we collect in Table \ref{tabela1} some examples in low genus.
\begin{table}[h]
\begin{center}
\caption{degree of the normal sheaf $\mathcal{N}_{C_{\N}/\mathbb{P}^{g-1}}$ when $\N$ is symmetric and $4\leq g\leq 7$}
\medskip
\begin{tabular}{cccc|cccc}
$\N$ & $g$ & $\tau$ & $\deg(\mathcal{N}_{C_{\N}/\mathbb{P}^{g-1}})$ & $\N$ & $g$ & $\tau$ & $\deg(\mathcal{N}_{C_{\N}/\mathbb{P}^{g-1}})$ \\ \hline
$<3,5>$ & $4$ & $8$ & $30$ & $<6,7,8,9,10>$ & $6$ & $15$ & $73$  \\ \hline
$<4,5,6>$ & $4$ & $8$ & $30$ & $<3,8>$ & $7$ & $14$ & $96$ \\ \hline
$<4,6,7>$ & $5$ & $10$ & $48$  & $<4,7,10>$ & $7$ & $14$ & $96$ \\ \hline
$<5,6,7,8>$ & $5$ & $10$ & $48$  & $<4,6,11>$ & $7$ & $14$ & $96$ \\ \hline
$<3,7>$ & $6$ & $12$ & $70$ & $<5,7,9,11>$ & $7$ & $14$ & $96$ \\ \hline
$<4,6,9>$ & $6$ & $12$ & $70$ & $<5,6,9>$ & $7$ & $14$ & $96$ \\ \hline
$<4,5>$ & $6$ & $12$ & $70$ & $<6,8,9,10,11>$ & $7$ & $17$ & $99$ \\ \hline
$<5,7,8,9>$ & $6$ & $12$ & $70$ & $<7,8,9,10,11,12>$ & $7$ & $21$ & $103$
\end{tabular}
\label{tabela1}
\end{center}
\end{table}

\end{exam}

An immediate consequence of the above global results Theorem \ref{degnormal} and Lemma \ref{dual-lci} is the following known
local result, c.f. \cite[Section 2.6]{Greuel}.

\begin{cor}\label{relacao-invariantes}
If $C\subset\mathbb{P}^r$ is a locally complete intersection integral curve, then $\tau=e$.
\end{cor}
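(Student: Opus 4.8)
The plan is to derive the claim directly by comparing the two formulas for $\deg(\mathcal{N}_{C/\mathbb{P}^r})$ that are now available to us. On one hand, Theorem \ref{degnormal} gives
\[
\deg(\mathcal{N}_{C/\mathbb{P}^r})=2g-2+(r+1)d+\tau-e,
\]
which holds for any integral projective curve. On the other hand, when $C$ is a locally complete intersection in $\mathbb{P}^r$, Lemma \ref{dual-lci} applies with $X=\mathbb{P}^r$, $Y=C$, $p=r-1$, yielding $\omega_C=\omega_{\mathbb{P}^r}\otimes\mathcal{O}_C\otimes\det(\mathcal{N}_{C/\mathbb{P}^r})$. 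Taking degrees and using Lemma \ref{lema-deg}\ref{item1} to replace $\deg(\mathcal{N}_{C/\mathbb{P}^r})$ by $\deg(\det(\mathcal{N}_{C/\mathbb{P}^r}))$, we get
\[
\deg(\omega_C)=\deg(\omega_{\mathbb{P}^r}\otimes\mathcal{O}_C)+\deg(\mathcal{N}_{C/\mathbb{P}^r}).
\]

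Next I would evaluate the two known quantities in this second identity. Since $C$ is Gorenstein of arithmetic genus $g$ (being a locally complete intersection, it is automatically Gorenstein), $\omega_C$ is invertible of degree $2g-2$, so $\deg(\omega_C)=2g-2$. And $\omega_{\mathbb{P}^r}\otimes\mathcal{O}_C=\mathcal{O}_{\mathbb{P}^r}(-r-1)\otimes\mathcal{O}_C$, whose degree is $-(r+1)d$ by the projection formula exactly as computed in the proof of Theorem \ref{degnormal}. Substituting, the lci formula reads
\[
\deg(\mathcal{N}_{C/\mathbb{P}^r})=2g-2+(r+1)d.
\]

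Comparing this with the general formula from Theorem \ref{degnormal}, namely $\deg(\mathcal{N}_{C/\mathbb{P}^r})=2g-2+(r+1)d+\tau-e$, forces $\tau-e=0$, i.e. $\tau=e$, which is the assertion. The only point requiring a word of care is that Lemma \ref{dual-lci} was stated for a closed subscheme $Y$ of a \emph{regular} ambient scheme $X$; here I use it with $X=\mathbb{P}^r$, which is regular, and $C$ locally complete intersection of codimension $r-1$, so the hypotheses are met verbatim. I do not expect any genuine obstacle: the proof is a two-line bookkeeping once both formulas are in hand, and indeed the statement is essentially a corollary in the literal sense. If one wanted to be slightly more careful, one could note that $\tau=\deg(\mathrm{T}^1)$ and that for an lci the module $\mathrm{T}^1$ measures precisely the failure of $\mathcal{T}_{C/\mathbb{P}^r}\to\mathcal{N}_{C/\mathbb{P}^r}$ to be surjective, but the degree comparison above already packages all of this.
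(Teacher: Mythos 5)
Your argument is correct and is exactly the route the paper intends: the corollary is presented as an immediate consequence of Theorem \ref{degnormal} combined with Lemma \ref{dual-lci}, i.e.\ computing $\deg(\mathcal{N}_{C/\mathbb{P}^r})$ once via the adjunction-type formula for a locally complete intersection (giving $2g-2+(r+1)d$) and once via the general degree formula (giving $2g-2+(r+1)d+\tau-e$), whence $\tau=e$. No gaps; the side remarks about $C$ being Gorenstein and about Lemma \ref{lema-deg}\ref{item1} are the right points to check.
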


Now let $\mathcal{Z}$ be reduced projective scheme just admitting isolated singularities and assume that  $\eta:\mathcal{Z}\to S$ is a flat morphism whose fibers $\mathcal{Z}_s$ are reduced projective curves. So, under these conditions, we are able
to consider the sheaves $\mathrm{T}^1_{\mathcal{Z}}, \mathcal{D}_{\mathcal{Z}}$ and $\overline{\mathcal{O}}_{\mathcal{Z}}/\mathcal{O}_{\mathcal{Z}}$ as sheaves whose stalks on a fiber $Z_s$ are just $\mathrm{T}^{1}_{Z_s}$, $\mathcal{D}_{Z_s}$ and
$\overline{\mathcal{O}}_{\mathcal{Z}_s}/\mathcal{O}_{\mathcal{Z}_s}$, see \cite[pg.~148]{Z06} for further details. Moreover, these sheaves are flat over $S$. Therefore, we can  apply the upper semicontinuity theorem,
 c.f. \cite[Thm. 12.8]{Har77}, to $s\mapsto \dim H^0(\mathcal{Z}_s,\mathrm{T}^1_{\mathcal{Z}_s})$, $s\mapsto \dim H^0(\mathcal{Z}_s,\mathcal{D}_{\mathcal{Z}_s})$ and
$s\mapsto \dim H^0(\mathcal{Z}_s,\overline{\mathcal{O}}_{\mathcal{Z}_s}/\mathcal{O}_{\mathcal{Z}_s})$, one can easily check the following result.

\begin{prop}\label{semi-tau-mu}
Under the above conditions, the invariants $\tau$, $\theta$ and $\delta$ are upper semicontinuous functions.
\end{prop}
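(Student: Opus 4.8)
The statement asserts upper semicontinuity of the invariants $\tau$, $\theta$ and $\delta$ along a flat family $\eta\colon\mathcal{Z}\to S$ of reducible projective curves. The natural strategy, already signalled in the paragraph preceding the statement, is to express each of these three global invariants as $\dim_{\k} H^0$ of a suitable coherent sheaf that is built fibrewise from the cotangent complex, the module of derivations, or the conductor, and then invoke the upper semicontinuity theorem for cohomology of a flat family, i.e.\ \cite[Thm.~12.8]{Har77}. So the first step is to set up, for each $s\in S$, the three skyscraper sheaves $\mathrm{T}^1_{\mathcal{Z},s}$, $\mathcal{D}_{\mathcal{Z},s}$ and $\overline{\mathcal{O}}_{\mathcal{Z},s}/\mathcal{O}_{\mathcal{Z},s}$ on the fibre $C_s=\mathcal{Z}_s$, and to identify
$$\tau(C_s)=\dim_{\k} H^0(C_s,\mathrm{T}^1_{\mathcal{Z},s}),\quad \theta(C_s)=\dim_{\k} H^0(C_s,\mathcal{D}_{\mathcal{Z},s}),\quad \delta(C_s)=\dim_{\k} H^0(C_s,\overline{\mathcal{O}}_{\mathcal{Z},s}/\mathcal{O}_{\mathcal{Z},s}),$$
using Lemma \ref{lema-deg}\ref{item2} together with the fact that each of these sheaves is a torsion (skyscraper) sheaf supported on the finitely many singular points of $C_s$, so that $H^1$ vanishes and the Euler characteristic is just $h^0$.

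The second step is to produce, after possibly shrinking or stratifying $S$, coherent $\mathcal{O}_{\mathcal{Z}}$-modules $\mathcal{T}^1_{\mathcal{Z}/S}$, $\mathcal{D}_{\mathcal{Z}/S}$ and $\overline{\mathcal{O}}_{\mathcal{Z}/S}/\mathcal{O}_{\mathcal{Z}}$ whose formation is compatible with base change, so that restricting to a fibre recovers the sheaves of the first step. For $\mathrm{T}^1$ this is the relative cotangent-complex construction of Lichtenbaum--Schlessinger \cite{LS67}: $\mathcal{T}^1_{\mathcal{Z}/S}$ is the cokernel of a map of coherent sheaves (by \cite[Lemma 3.1.2]{LS67}, exactly as used for the single curve in \eqref{seq-us}), and one needs its compatibility with the base change $s\hookrightarrow S$. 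For $\delta$ one uses the relative normalisation $\overline{\mathcal{Z}}\to\mathcal{Z}$ and the fact that, in characteristic zero and for the relevant families, normalisation commutes with base change onto a dense open (or on each stratum of a suitable stratification), giving $\overline{\mathcal{O}}_{\mathcal{Z}/S}/\mathcal{O}_{\mathcal{Z}}$ as a coherent sheaf with the right fibres. For $\theta$ one similarly pushes forward the relative derivation sheaves $\mathrm{Der}_{\k}(\mathcal{O}_{\overline{\mathcal{Z}}/S},\mathcal{O}_{\overline{\mathcal{Z}}/S})$ and takes the cokernel, using the characteristic-zero hypothesis to guarantee injectivity of the map of derivation modules on each fibre. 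With these relative sheaves in hand, the third step is immediate: $\eta$ is flat and proper (projective fibres), so \cite[Thm.~12.8]{Har77} applied to $s\mapsto\dim_{\k} H^0(C_s,\mathcal{F}_s)$ for each of the three sheaves $\mathcal{F}$ gives that each function is upper semicontinuous on $S$.

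The main obstacle is the second step, i.e.\ the base-change compatibility of the three relative constructions. Flatness of $\eta$ is not by itself enough to ensure that $\mathrm{T}^1$, the conductor, or the derivation cokernel commute with passing to a fibre — jumping behaviour of these invariants is precisely what produces the semicontinuity, but one must still know that the relative sheaf restricts to the correct fibrewise object (otherwise one is comparing the wrong quantities). The cleanest way around this is to restrict attention to the locus of $S$ over which the family is \emph{equisingular} in the appropriate sense, or to stratify $S$ so that on each locally closed stratum the normalisation and the $\mathrm{T}^1$-sheaf behave well, and then to observe that upper semicontinuity on all of $S$ follows by comparing generic values on a stratum with the (possibly larger) values on its boundary. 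Since the paper only claims the qualitative statement "upper semicontinuous", it suffices to carry this out at the level of \cite[Thm.~12.8]{Har77} plus the standard fact that $H^0$ of a torsion sheaf computes its length, and to note that the delicate points are exactly those already handled in the classical local theory of Deligne \cite{Del73}, Buchweitz \cite{Bu80} and Greuel \cite{Greuel}, which we are revisiting here in a global guise.
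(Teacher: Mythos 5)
Your plan is essentially the paper's own argument: the paper disposes of this proposition in one sentence by applying \cite[Thm.~12.8]{Har77} to the functions $s\mapsto\dim H^0$ of the three skyscraper sheaves $\mathrm{T}^1$, $\mathfrak{D}$ and $\overline{\mathcal{O}}/\mathcal{O}$ on the fibres, which is exactly your first and third steps. What you add, and what the paper leaves entirely implicit, is the correct observation that the whole weight of the proof sits in your second step: one must exhibit relative coherent sheaves on $\mathcal{Z}$ whose restriction to each fibre really computes $\tau(C_s)$, $\theta(C_s)$ and $\delta(C_s)$. For $\mathrm{T}^1$ this is unproblematic (it is the cokernel of $\mathcal{T}_{\mathbb{P}^r\times S/S}|_{\mathcal{Z}}\to\mathcal{N}_{\mathcal{Z}/\mathbb{P}^r\times S}$, and cokernels commute with base change; moreover its support is finite over $S$, so pushing forward and taking fibre dimension is harmless). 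For $\delta$ and $\theta$, which involve the relative normalisation, base change genuinely fails in general, and this is where your argument, like the paper's, is incomplete.

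Two concrete points to repair. First, \cite[Thm.~12.8]{Har77} requires the sheaf to be flat over $S$, which none of these torsion sheaves are; the tool you actually want is the upper semicontinuity of $s\mapsto\dim_{k(s)}\bigl(\mathcal{F}\otimes k(s)\bigr)$ for a coherent sheaf $\mathcal{F}$ whose support is finite over $S$ (a consequence of Nakayama plus coherence of the pushforward), not the flat-cohomology semicontinuity theorem. Second, your proposed fix for the normalisation via stratification does not by itself give semicontinuity: stratifying $S$ so that the invariant is constant on each locally closed stratum only yields constructibility, and "comparing generic values with values on the boundary" is precisely the specialisation inequality one has to prove, not a formality. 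For $\delta$ in characteristic zero this inequality is a genuine classical theorem (it is what Greuel's and Teissier's work on simultaneous normalisation and contractible deformations is about, cf.\ \cite{Greuel}), and for $\theta$ one must in addition control the derivation modules of the normalised fibres. So the architecture of your proof is right and matches the paper, but the second step needs either a citation to the classical semicontinuity of $\delta$ (and the analogous statement for $\theta$) or an actual argument; as written, that step is asserted rather than proved.
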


Complete intersections are stable under flat deformations with connected base, provided that the
ground field has characteristic zero, and once the degree of the normal sheaf
of a complete intersection curve only depends on the genus and the degree of the curve, 
c.f. Corollary \ref{relacao-invariantes} and Theorem \ref{degnormal}, it follows that the degree
of the normal sheaf of a complete intersection curve is stable under flat deformations. 

On the other hand, it is easy
to see that the degree of the normal sheaf cannot be an upper semi-continuous function,
just because smooth curves can degenerate to singular ones. But if one keeps the singular
degree under deformations, it is easy to see that the degree of the normal sheaf is a semicontinuos
function as well. 

The notion of a deformation preserving the singularity degree appears in the literature
when we (formally) deform an analytic germ of a singularity. In a survey \cite{Greuel}
by Greuel, the author shows that the singularity degree of an analytic germ is preserved if, and only if,
it is \emph{contractible}, cf. \cite[Corollary 2.46]{Greuel}. We also should mention a result due to Teissier, cf. \cite[2.10 Theorem 3]{Z06},
showing that for a given numerical semigroup $\N$, the positively graded part $\mathrm{T}^{1,+}(\k[\N])$ of the cotangent complex 
is a miniversal space for deformations of the affine monomial curve $C_{\N}$ with reducible base such that each fiber has a singular point whose singular
semigroup is $\N$.

An immediate consequence of Theorem \ref{degnormal} and Proposition \ref{semi-tau-mu} is that, over suitable conditions, the degree of the normal sheaf is an 
upper semicontinuous function.

\begin{cor}\label{semicont}
Let $C\subseteq \mathbb{P}^r$ be a reduced projective curve of degree $d$ and arithmetic 
genus $g$. We have that $\deg(\mathcal{N}_{C/\mathbb{P}^n})$ is an upper semicontinuous function for embedded deformations preserving
the singularity degree $\delta$. 
In other words, if $\eta:\mathcal{X}\rightarrow S$ is a deformation of $C=\mathcal{X}_{\mathbf{0}}$, with $\mathcal{X}\subseteq S\times\mathbb{P}^r$ just admitting isolated singularities, such that each fiber
has constant singularity degree, i.e. $\delta(\mathcal{X}_s)=\delta(C)$ for each $s\in S$, then
$$\deg(\mathcal{N}_{\mathcal{X}_s/\mathbb{P}^r})\leq \deg(\mathcal{N}_{C/\mathbb{P}^r})=2g-2+(r+1)d+\tau-e.$$
\end{cor}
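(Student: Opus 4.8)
The plan is to reduce the statement to the upper semicontinuity of the local invariants already recorded in Proposition \ref{semi-tau-mu}, using the closed formula of Theorem \ref{degnormal}. First I would note that $\eta\colon\mathcal{X}\to S$ is flat, so the Hilbert polynomial of the fibers is locally constant; hence the degree $d$ and the arithmetic genus $g$ of each fiber $\mathcal{X}_s$ coincide with those of $C=\mathcal{X}_{\mathbf{0}}$ on the connected component of $S$ through $\mathbf{0}$. In particular $2g-2+(r+1)d$ is a constant of the family. One also records that the deformation keeps the fibers reduced, so that the four-term sequence \eqref{seq-us} is available for every $\mathcal{X}_s$ and the formula of Theorem \ref{degnormal} applies fiber by fiber.

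Next I would apply Theorem \ref{degnormal} to each fiber $\mathcal{X}_s\subseteq\{s\}\times\mathbb{P}^r$ and substitute $e=3\delta-\theta$ into the resulting expression, obtaining
\begin{equation*}
\deg(\mathcal{N}_{\mathcal{X}_s/\mathbb{P}^r})=\bigl(2g-2+(r+1)d\bigr)-3\,\delta(\mathcal{X}_s)+\tau(\mathcal{X}_s)+\theta(\mathcal{X}_s).
\end{equation*}
By hypothesis $\delta(\mathcal{X}_s)=\delta(C)$ for every $s\in S$, so the right-hand side differs from $\tau(\mathcal{X}_s)+\theta(\mathcal{X}_s)$ only by the fixed integer $K:=2g-2+(r+1)d-3\,\delta(C)$.

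Finally, since a finite sum of upper semicontinuous functions is upper semicontinuous, Proposition \ref{semi-tau-mu} gives that $s\mapsto\tau(\mathcal{X}_s)+\theta(\mathcal{X}_s)$, and therefore $s\mapsto\deg(\mathcal{N}_{\mathcal{X}_s/\mathbb{P}^r})=K+\tau(\mathcal{X}_s)+\theta(\mathcal{X}_s)$, is upper semicontinuous on $S$. Evaluating this at the distinguished closed point $\mathbf{0}\in S$, whose fiber is $C$, yields $\deg(\mathcal{N}_{\mathcal{X}_s/\mathbb{P}^r})\le\deg(\mathcal{N}_{C/\mathbb{P}^r})=2g-2+(r+1)d+\tau-e$, as claimed.

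I do not expect a serious obstacle here; the only points demanding care are the ones flagged in the first paragraph, namely that flatness is what pins down $d$ and $g$, that the fibers remain reduced so that Theorem \ref{degnormal} is applicable to each $\mathcal{X}_s$, and the harmless remark that upper semicontinuity is stable under adding a constant and under finite sums. Everything else is the bookkeeping above.
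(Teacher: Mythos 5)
Your argument is correct and is essentially the paper's own: the corollary is deduced from the formula of Theorem \ref{degnormal} by writing $\tau-e=\tau+\theta-3\delta$, using the hypothesis that $\delta$ is constant along the fibers, and invoking the upper semicontinuity of $\tau$ and $\theta$ from Proposition \ref{semi-tau-mu} together with the constancy of $d$ and $g$ coming from flatness. Your write-up merely makes explicit the bookkeeping that the paper leaves implicit, so there is nothing further to flag.
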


\bibliographystyle{amsalpha}

\bigskip

\parbox[t]{3in}{{\rm Andr\'e Contiero}\\
{\tt \href{mailto:contiero@ufmg.br}{contiero@ufmg.br}}\\
{\it Universidade Federal de Minas Gerais}\\
{\it Belo Horizonte, MG, Brazil}} \hspace{0.9cm}
\parbox[t]{3in}{{\rm Aislan L. Fontes}\\
{\tt \href{mailto:aislan@ufs.br }{aislan@ufs.br }}\\
{\it Universidade Federal de Sergipe}\\
{\it Itabaiana, SE, Brazil}} 

\bigskip

\parbox[t]{3in}{{\rm J\'unio Teles}\\
{\tt \href{mailto:telesjunio@gmail.com }{telesjunio@gmail.com }}\\
{\it Universidade Federal de Minas Gerais}\\
{\it Belo Horizonte, MG, Brazil}} \hspace{0.9cm}

\end{document}